\newcommand{\until}[1]{\{1,\dots, #1\}}
\DeclareMathOperator{\sgn}{sgn}
\newcommand\oprocendsymbol{\hbox{$\triangle$}}
\newcommand\oprocend{\relax\ifmmode\else\unskip\hfill\fi\oprocendsymbol}
\DeclareSymbolFont{bbold}{U}{bbold}{m}{n}
\DeclareSymbolFontAlphabet{\mathbbold}{bbold}
\newcommand{\vect}[1]{\mathbbold{#1}}
\renewcommand{\circ}{\odot}
\newcommand{\scirc}{\raise1pt\hbox{$\,\scriptstyle\circ\,$}}
\newcommand{\real}{\mathbb{R}}
 \newcommand{\cl}{\mathrm{cl}}
\newtheorem{theorem}{Theorem}
\newtheorem{lemma}{Lemma} 
\newtheorem{remark}{Remark}
\newtheorem{definition}{Definition}
\newtheorem{assumption}{Assumption}
\newtheorem{problem}{Problem}
\DeclareMathOperator{\diag}{diag}
\DeclareMathOperator{\adj}{adj}
\renewcommand{\top}{\mathsf{T}}
\begin{document}
	
	\title{Convex Optimization \\ of the Basic Reproduction Number}

	\author{Kevin D. Smith, \IEEEmembership{Student Member, IEEE}, and Francesco Bullo, 
	\IEEEmembership{Fellow, IEEE}%
		\thanks{Kevin D. Smith and Francesco Bullo are with the Center for 
		Control, 
		Dynamical Systems and Computation, UC Santa Barbara, CA 93106-5070, USA. {\tt 
		\{kevinsmith,bullo\}@ucsb.edu}}
		\thanks{This work was supported in part by the U.S.\ Defense Threat 
		Reduction Agency under grant HDTRA1-19-1-0017 and by the AFOSR grant
FA9550-22-1-0059. The authors would like to thank 
		Aaron Bagheri in the UCSB Department of Mathematics for insightful comments and 
		discussion.}}
	
	\maketitle
	
	\begin{abstract}
	  The basic reproduction number $R_0$ is a fundamental quantity in
      epidemiological modeling, reflecting the typical number of
      secondary infections that arise from a single infected
      individual. While $R_0$ is widely known to scientists,
      policymakers, and the general public, it has received
      comparatively little attention in the controls community. This
      note provides two novel characterizations of $R_0$: a stability
      characterization and a geometric program characterization. The
      geometric program characterization allows us to write
      $R_0$-constrained and budget-constrained optimal resource
      allocation problems as geometric programs, which are easily transformed into 
      convex optimization problems. We apply these programs to allocating 
      vaccines and antidotes in numerical examples, finding that targeting 
      $R_0$ instead of the spectral abscissa of the Jacobian matrix (a common target 
      in the controls literature) leads to qualitatively different solutions. 
	\end{abstract}
	
	\begin{IEEEkeywords}
		Epidemics, Compartmental Models, Geometric Programming, Optimal Resource 
		Allocation, Convex Optimization
	\end{IEEEkeywords}

	\IEEEpeerreviewmaketitle
	
	\section{Introduction}
	
	Perhaps the most important parameter in an epidemic is the basic
        reproduction number.  This number, denoted $R_0$, is the number of
        secondary infections that arise from a typical infected individual
        within an otherwise completely susceptible population.  $R_0$ is a
        widely-known term, especially since 2020, when articles with ``$R_0$''
        in the title ran in mainstream publications like \textit{The New York
          Times} and \textit{The Wall Street Journal}. Since $R_0$ is an
        intuitive and widely-known quantity, one might also expect it to appear
        frequently in the controls literature on epidemics, but this is not the
        case.
	
	Instead, the literature tends to focus on two other major approaches to
        epidemic control.  First, in the \emph{optimal control framework},
        parameters or control inputs are chosen to minimize some cost function
        integrated along the model trajectory \cite{RER-RL-CAG:09, SL-GC-CCC:10,
          MH-FB-VMP:21, VLJS-IRM:21}. These trajectories seldom admit
        closed-form solutions, so this approach generally requires
        model-specific analysis and numerical solutions of Pontryagin's
        conditions \cite{RER-RL-CAG:09, SL-GC-CCC:10}, potentially large-scale
        optimization to embed discrete-time dynamics \cite{MH-FB-VMP:21}, or
        linearization and a discount factor to ensure convergence
        \cite{VLJS-IRM:21}. The second major approach is the \emph{spectral
          optimization framework}, in which resources are allocated to minimize
        the spectral abscissa of the model's Jacobian matrix about some
        disease-free equilibrium \cite{VMP-MZ-CE-AJ-GJP:14, JAT-SR-YW:17,
          CN-VMP-GJP:17, VSM-AB-KM:18, ARH-JG-PEP:21}.  If the Jacobian is
        stable, then the abscissa represents the rate at which the trajectory
        converges to this equilibrium, so minimizing the (negative) abscissa
        leads to a faster-decaying epidemic. Spectral optimization is based on a
        linear approximation of the model, but it is nonetheless an appealing
        framework for resource allocation, since the spectral abscissa can be
        directly evaluated from model parameters (without computing a
        trajectory).
	
	The spectral abscissa is closely related to $R_0$. They are equivalent threshold 
	parameters for whether the epidemic spreads or decays: in compartmental epidemic 
	models (under
    reasonable assumptions), the epidemic enters an exponential growth phase
    if and only if the abscissa is positive, if and only if $R_0 > 1$
    \cite{PvdD-JW:02}. Furthermore, intuitively, both quantities reflect the
    \textit{rate} at which the epidemic spreads or decays. But it is important to
    note that the abscissa and $R_0$ are different quantities. In fact, through 
    proper choice of infection and recovery rates in the 
    Kermack-McKendrick SIR model, one can achieve \textit{any} pair of
    values for the abscissa $\alpha$ and reproduction number $R_0$ 
    such that $R_0 > 0$ and $\sgn(\alpha) = \sgn(R_0 - 1)$. Thus, while the 
    intuition for these two quantities is similar, minimizing the abscissa will 
    generally lead to a different allocation of resources than minimizing $R_0$ 
    directly.
        
    To our knowledge, there is no work in the literature that focuses on directly 
    minimizing or constraining $R_0$ in the resource allocation problem. Motivated 
    by the ubiquity of $R_0$ in epidemiology and its popularity in the public discourse 
    around COVID-19, this note provides theoretical foundations to fill in this gap.
        
%        The premise of this note is that the basic reproduction number $R_0$ is
%        a more appealing parameter for resource allocation than the spectral
%        abscissa. One important reason is communicability. Since optimal
%        resource allocation during epidemics is ultimately meant to influence
%        public policy, it is important to properly inform the public and clearly
%        communicate the objective of this allocation. Arguably, the minimization
%        of $R_0$ is a much easier objective to explain to a layperson than that
%        of the spectral abscissa. A second advantage of $R_0$ is that it more
%        directly reflects the rate at which new individuals are
%        infected. Despite these advantages, to our knowledge there is no work in
%        the literature that focuses on directly minimizing or constraining $R_0$
%        in the resource allocation problem.
	
	\paragraph*{Contributions}
	
	We propose a modification of the spectral optimization framework
        to operate on $R_0$ instead of on the spectral abscissa. We offer three
        primary contributions:
	\begin{enumerate}
	\item We provide two novel characterizations of $R_0$ in compartmental
          epidemic models. One characterization relates $R_0$ to the stability
          of perturbations to the Jacobian matrix, and the other expresses $R_0$
          as a geometric program, which can be transformed into a convex
          optimization problem.
                
	\item We define two $R_0$-based optimal resource allocation problems:
          the $R_0$-constrained allocation problem, which identifies the
          lowest-cost allocation to restrict $R_0$ below a given upper bound;
          and the budget-constrained allocation problem, which minimizes $R_0$
          with a limited allowance for resource cost. We provide a geometric
          programming transcription for both of these problems, allowing them to
          be solved efficiently with off-the-shelf software.
                
	\item We present numerical results based on a county-level
          multi-group SEIR model in California, parameterized using real-world
          cell phone mobility data. The experiments study the allocation of
          vaccines and antidotes, a classical problem in spectral
          optimization. We explain and emphasize the differences between the allocations 
          based on $R_0$ and the corresponding allocations based on the abscissa.
	\end{enumerate}   

	\paragraph*{Organization}

	Section \ref{sect:prelim} introduces the general family of compartmental epidemic 
	models that we consider (\S\ref{sect:compartmental}), formally defines $R_0$ 
	(\S\ref{sect:R0}), briefly reviews geometric programming (\S\ref{sect:geometric}), 
	and states three key lemmas about Metzler and Hurwitz matrices 
	(\S\ref{sect:lemmata}). Section \ref{sect:opt} presents our main theoretical results, 
	including the two new characterizations of $R_0$ (\S\ref{sect:char}), and the two 
	$R_0$-based optimal resource allocation problems and their geometric program 
	transcriptions (\S\ref{sect:alloc}). Finally, Section \ref{sect:numerical} presents 
	the numerical experiments. 

	\paragraph*{Notation}
	
	The matrix $A \in \real^{n \times n}$ is \textit{Metzler} if all
        its off-diagonal entries are non-negative and is \textit{Hurwitz}
        if all its eigenvalues have negative real part. Let $\rho(A)$
        denote the spectral radius of $A$. Given $A \in \real^{n \times
          n}$, let $\diag(A)$ denote the vector in $\real^n$ composed of
        the diagonal elements of $A$. Given $x \in \real^n$, let $\diag(x)$
        denote the diagonal matrix whose diagonal is $x$. Thus
        $\diag(\diag(x)) = x$, and $\diag(\diag(A))$ is a copy of $A$ with
        all off-diagonal entries set to zero. Given a set $S$, we write
        $\cl(S)$ to denote the closure of $S$.
	
	\section{Preliminaries}
	\label{sect:prelim}
	
	\subsection{Compartmental Epidemic Models}
	\label{sect:compartmental}
	Compartmental models are a general and widely-used family of epidemic models that 
	divide a population into compartments based on disease state and 
	other demographic factors. This paper focuses on deterministic epidemic models, in 
	which the number of individuals in each compartment is governed by a system 
	of differential equations. Perhaps the most well-known example is 
	Kermack and McKendrick's SIR model, which has three compartments (susceptible, 
	infected, and recovered), but compartmental models can be arbitrarily complex to 
	capture nuances in the spread of infection between different parts of the population 
	in different disease states. Compartmental models are frequently based on
	an underlying stochastic model, such that the state variables approximate the 
	expected number of individuals in each compartment. 
	
	We consider the general compartmental model in \cite{PvdD-JW:02},
        with $n$ infected compartments and $m$ non-infected
        compartments. The components of this model are as follows. Let $x
        \in \real^n$ be the expected numbers of individuals in each
        infected compartment, and let $y \in \real^m$ be the expected
        numbers of non-infected individuals. The resulting dynamics is
        \begin{subequations}\label{eqs}
	\begin{align}
	  \dot x &= f(x, y) + v(x, y) \label{eq:x} \\
	  \dot y &= g(x, y) \label{eq:y}
	\end{align} 
        \end{subequations}
	where $f$, $v$, and $g$ are continuously differentiable and defined on non-negative 
	domains. The dynamics of the infected 
	subsystem are decomposed into two vector fields $f$ and 
	$v$, where $f$ contains the rates at which 
	new infections appear, and $v$ contains rates of transitions that do not correspond 
	to new infections. For example, if infected individuals must pass through a latent 
	disease state before entering an active infectious state (as in the SEIR model), then 
	$f$ captures new infections as they appear in the latent state, while transitions 
	from latent to active infections are contained in $v$, since the latter are not 
	altogether new infections. This explicit separation of rates corresponding 
	to new infections from all other transitions is crucial to the computation of $R_0$, 
	and it reflects extra physical interpretation that cannot be inferred from the 
	expression for $\dot x$ alone.
	
%%	The dynamics must satisfy the following assumptions:
	\begin{assumption}[Regularity of $f$, $v$, and $g$] \label{ass:regularity}
		The vector fields $f$, $v$, and $g$ have the following properties:
		\begin{enumerate}
			\item $f(x, y) \ge \vect 0_n$ for all $x$ and $y$; \label{cond:nonneg}
			\item \label{cond:zero} $f(\vect 0_n, y) = \vect 0_n$ and $v(\vect 0_n, y) = 
			\vect 0_n$ for all $y$; 
			\item \label{cond:dry-v} for all $x$, $y$, and $i$, $x_i = 0$ implies that 
			$v_i(x, y) \ge 0$;
			\item \label{cond:dry-g} for all $x$, $y$, and $j$, $y_j = 0$ implies that 
			$g_j(x, y) \ge 0$.
		\end{enumerate}
	\end{assumption}

	\noindent 
	Assumption \ref{ass:regularity} collects weak conditions that are
        obvious from the physical interpretations of $f$, $v$, and
        $g$. Condition~\ref{cond:nonneg} follows from the interpretation of $f$
        as a rate at which new infections are
        created. Condition~\ref{cond:zero} ensures that no individuals can
        transfer into or out of an infected compartment (through new
        infections or otherwise) if the population is completely free of
        disease; thus every disease-free state is an equilibrium of
        \eqref{eq:x}. Finally, conditions~\ref{cond:dry-v} and~\ref{cond:dry-g}
        reflect the fact that individuals cannot transition out from an
        empty compartment.
	
	We also assume that \eqref{eqs} admit a disease-free equilibrium
        point $(\vect 0_n, y^*)$ that is locally asymptotically stable
        \textit{in the absence of new infections}. That is, if new
        infections are ``switched off'' by dropping the vector field $f$
        from the dynamics, then the population will return to $(\vect 0_n,
        y^*)$ even if a small number of infected individuals are
        introduced.
	\begin{assumption}[Existence of a Stable Equilibrium] \label{ass:dfe}
		There exists $y^* \ge \vect 0_m$ such that $g(\vect 0_n, y^*) = \vect 0_m$ and 
		the following Jacobian matrix  is Hurwitz:
		\[
			D \begin{bmatrix} v(\vect 0_n, y^*) \\ g(\vect 0_n, y^*) \end{bmatrix}
			= \begin{bmatrix}
				D_x v(\vect 0_n, y^*) & D_y v(\vect 0_n, y^*) \\
				D_x g(\vect 0_n, y^*) & D_y g(\vect 0_n, y^*)
			\end{bmatrix}.
		\]
	\end{assumption} 

	\noindent
	The point $(\vect 0_n, y^*)$ satisfying Assumption \ref{ass:dfe} is not necessarily 
	unique, and while it is also an equilibrium point of the full model, it may be 
	unstable when $f$ is no longer ignored. 
	
	Under Assumptions \ref{ass:regularity} and \ref{ass:dfe}, linearizing the dynamics of 
	\eqref{eq:x} about $(\vect 0_n, y^*)$ decouples them from $y$, and we obtain
	\begin{equation}
		\dot x = (F + V) x \label{eq:linear}
	\end{equation}
	where $F = D_x f(\vect 0_n, y^*)$ is non-negative and $V = D_x v(\vect 0_n, y^*)$ is 
	Hurwitz and Metzler. We refer the reader to \cite[Lemma 1]{PvdD-JW:02} for the 
	details of this linearization.  
	
	\subsection{Basic Reproduction Numbers}
	\label{sect:R0}
	
	The basic reproduction number is well-known in epidemiology as the
        typical number of secondary infections that arise from a single
        infected individual, within an otherwise completely susceptible
        population. Diekmann, Heesterbeek, and Metz~\cite{OD-JAPH-JAJM:90}
        introduced the next generation operator to compute this quantity in
        general models with structured populations. This approach was later
        applied by van den Driessche and Watmough \cite{PvdD-JW:02}
        specifically to the compartmental model~\eqref{eqs}.

    \begin{definition}
      For a compartmental epidemic model~\eqref{eq:x}-\eqref{eq:y}
      satisfying Assumptions~\ref{ass:regularity} and~\ref{ass:dfe} and
      with linearization~\eqref{eq:linear} about $(\vect 0_n, y^*)$, the \textit{basic 
      reproduction number} is
	  \begin{equation} \label{eq:char-ngm}
	    R_0 = \rho(F V^{-1}).
	  \end{equation} 
    \end{definition}

	\noindent
	We refer the reader to \cite[\S 2]{OD-JAPH-JAJM:90} and \cite[\S 3]{PvdD-JW:02} for 
	derivations of \eqref{eq:char-ngm} from the epidemiological definition of $R_0$.
	
	\subsection{Geometric Programming}
	\label{sect:geometric}
	
	Geometric programs are a family of generally non-convex
        optimization problems that can be transformed into convex
        optimization problems by a change of variables. Geometric programs
        enjoy a multitude of applications in engineering and control
        theory, including the design of optimal positive systems
        \cite{MO-MK-JL:20}, a problem which is closely related to the
        resource allocation considered in this note. We refer the reader to
        \cite{SB-SJK-LV-AH:07} as a standard introduction to geometric
        programming and briefly introduce the key concepts in what follows.
	
	A \textit{monomial function} is a map $\real^n_{> 0} \to \real_{>
          0}$ of the form $f(x) = c x_1^{b_1} x_2^{b_2} \cdots x_n^{b_n}$,
        where $c > 0$ and $b_i \in \real$. A \textit{posynomial function}
        is a sum of monomial functions. Note that posynomials are closed
        under addition and multiplication, and that a posynomial divided by
        a monomial is a posynomial. Given a posynomial function $f_0$, a
        set of posynomial functions $f_i$, $i\in\until{m}$, and a set of
        monomial functions $g_i$, $i\in\until{p}$, a geometric program in
        standard form is:
	\[
		\begin{array}{rl}
			\text{minimize}: & f_0(x) \\
			\text{variables}: & x > \vect 0_n \\
			\text{subject to}: & f_i(x) \le 1, \; i\in\until{m}\\
			& g_i(x) = 1, \; i\in\until{p}
		\end{array}
	\]
	The problem becomes convex after the change of variables $x_i = e^{y_i}$.  
	Off-the-shelf software is available for geometric programs, including the CVX package 
	in MATLAB \cite{MG-SB:11-cvx}.

	\subsection{Properties of Hurwitz and Metzler Matrices}
	\label{sect:lemmata}
	
	We now reproduce three lemmas regarding properties of Metzler and Hurwitz matrices 
	that will be necessary for our main results. The first lemma is a standard result 
	characterizing the stability of Metzler matrices 
        (see \cite[Theorem~10.14]{FB:22}):
        
	\begin{lemma}[Metzler Hurwitz Lemma]
	  \label{thm:metzler-hurwitz:2}\index{Theorem!Metzler Hurwitz}
	  Let $M\in\real^{n\times{n}}$ be a Metzler matrix. The following are equivalent:
	  \begin{enumerate}
	  \item\label{fact:Metzler-Hurwitz:1-bis} $M$ is Hurwitz,
	  \item\label{fact:Metzler-Hurwitz:2-bis} $M$ is invertible and $-M^{-1}\geq0$, 
	    and
	  \item\label{fact:rantzer:1} there exists $w > \vect 0_n$ such that $M w < \vect 0_n$. 
	  \end{enumerate} 
	\end{lemma}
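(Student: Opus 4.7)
The plan is to establish the cyclic implications (1)$\Rightarrow$(2)$\Rightarrow$(3)$\Rightarrow$(1). A key device that appears in two of these steps is the \emph{spectral shift}: since $M$ is Metzler, one can choose $s > 0$ large enough that $N := M + sI \geq 0$, and then the eigenvalues of $M$ are exactly those of $N$ translated by $-s$. In particular, since $N$ is entrywise nonnegative, Perron--Frobenius tells us that $\rho(N)$ is itself an eigenvalue of $N$, and equals the maximum real part of any eigenvalue of $N$; hence the spectral abscissa of $M$ equals $\rho(N) - s$, and $M$ is Hurwitz if and only if $\rho(N) < s$.

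For (1)$\Rightarrow$(2), I would use this shift and a Neumann-series argument. With $N = M + sI \geq 0$ and $\rho(N) < s$ (by Hurwitzness), I write
\[
-M^{-1} = (sI - N)^{-1} = \tfrac{1}{s}\bigl(I - N/s\bigr)^{-1} = \tfrac{1}{s}\sum_{k=0}^{\infty}(N/s)^k,
\]
which is a convergent series of nonnegative matrices, so $-M^{-1} \geq 0$.

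For (2)$\Rightarrow$(3), I would simply pick any strictly positive vector, e.g.\ $v = \vect 1_n$, and set $w := -M^{-1}v$. Then $Mw = -v < \vect 0_n$, and it remains to verify $w > \vect 0_n$. Since $-M^{-1}\geq 0$ is invertible, none of its rows can be zero; combined with $v > \vect 0_n$, each entry of $-M^{-1}v$ is a sum of nonnegative terms with at least one strictly positive, so $w > \vect 0_n$.

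For (3)$\Rightarrow$(1), I would use the shift once more to reduce the claim to a Collatz--Wielandt-type estimate for nonnegative matrices. With $N = M + sI \geq 0$ and $w > \vect 0_n$ satisfying $Mw < \vect 0_n$, the hypothesis becomes $Nw < sw$ entrywise, i.e.\ $(Nw)_i / w_i < s$ for every $i$. The standard Collatz--Wielandt inequality for nonnegative matrices then gives $\rho(N) \leq \max_i (Nw)_i/w_i < s$, which by the shift relation means that the spectral abscissa of $M$ is negative, i.e.\ $M$ is Hurwitz. The main obstacle here, and indeed the conceptual heart of the lemma, is this last step: without the nonnegative structure one cannot pass from a single certificate vector $w$ to a global spectral bound, so the proof genuinely relies on invoking the Perron--Frobenius machinery (through the shift) rather than any direct Lyapunov calculation.
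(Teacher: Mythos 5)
Your proof is correct. Note that the paper does not actually prove this lemma: it is quoted as a standard result with a pointer to \cite[Theorem~10.14]{FB:22}, so there is no in-paper argument to compare against. Your cyclic argument is the classical textbook route and every step checks out: the Metzler shift $N = M + sI \ge 0$ with the Perron--Frobenius observation that the spectral abscissa of $M$ equals $\rho(N)-s$; the Neumann series $-M^{-1} = \tfrac{1}{s}\sum_{k\ge 0}(N/s)^k \ge 0$ for (1)$\Rightarrow$(2); the choice $w=-M^{-1}\vect 1_n$ for (2)$\Rightarrow$(3), where your justification of strict positivity of $w$ (no row of the invertible nonnegative matrix $-M^{-1}$ can vanish) is the right touch, since nonnegativity alone would only give $w\ge \vect 0_n$; and the Collatz--Wielandt bound $\rho(N)\le \max_i (Nw)_i/w_i < s$ for (3)$\Rightarrow$(1). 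This is essentially the argument the cited reference gives, so your write-up supplies a self-contained proof of exactly the fact the paper imports.
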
 

	\noindent
	We borrow the next two results from \cite{PvdD-JW:02}; the first is a
        slight restatement of \cite[Lemma 5]{PvdD-JW:02}, so we do not include a
        proof.
	\begin{lemma}[Properties of Hurwitz and Metzler Matrices] \label{lem:exercise}
		Let $H, M \in \real^{n \times n}$ be Metzler matrices, such that $H$ is Hurwitz 
		and $-M H^{-1}$ is Metzler. The following are equivalent:
		\begin{enumerate}
			\item $M$ is Hurwitz, and
			\item $-M H^{-1}$ is Hurwitz. 
		\end{enumerate}
	\end{lemma}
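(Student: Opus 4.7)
The plan is to use Lemma~\ref{thm:metzler-hurwitz:2} on both sides, together with one fact that lies outside its three equivalences: the Perron--Frobenius property for Metzler matrices, namely that the spectral abscissa $\alpha(A)$ of any Metzler $A$ is a real eigenvalue of $A$ admitting a nonnegative nonzero eigenvector (this follows from classical Perron--Frobenius applied to the nonnegative shift $A+sI$ for $s$ large enough). I would first record the setup: since $H$ is Metzler Hurwitz, Lemma~\ref{thm:metzler-hurwitz:2}(\ref{fact:Metzler-Hurwitz:2-bis}) gives $-H^{-1}\geq 0$, and invertibility rules out zero rows; hence the map $u\mapsto -H^{-1}u$ carries strictly positive vectors to strictly positive vectors, and sends any nonzero nonnegative vector to a nonzero nonnegative vector.

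For the easy direction $(2)\Rightarrow(1)$, I would invoke Lemma~\ref{thm:metzler-hurwitz:2}(\ref{fact:rantzer:1}) on the Metzler matrix $-MH^{-1}$ to produce $u>\vect 0_n$ with $-MH^{-1}u<\vect 0_n$. Setting $w:=-H^{-1}u$, the setup yields $w>\vect 0_n$, and the identity $Mw=-MH^{-1}u<\vect 0_n$ combined with Lemma~\ref{thm:metzler-hurwitz:2}(\ref{fact:rantzer:1}) applied to the Metzler matrix $M$ delivers $M$ Hurwitz.

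The harder direction $(1)\Rightarrow(2)$ is where the main obstacle lies: the naive mirror choice $u=-Hw$ from a witness $w$ of $Mw<\vect 0_n$ need not be positive, and there is no apparent reason that a common witness of $Mw<\vect 0_n$ and $Hw<\vect 0_n$ exists. My plan is to take $v\geq\vect 0_n$, $v\neq\vect 0_n$, a Perron-type eigenvector of the Metzler matrix $-MH^{-1}$ at its spectral abscissa $\alpha$, and set $w:=-H^{-1}v$, so that $w\geq\vect 0_n$ and $w\neq\vect 0_n$ by the setup. The computation
\[
Mw \;=\; -MH^{-1}v \;=\; \alpha\, v,
\]
combined with $M^{-1}\leq 0$ (from Lemma~\ref{thm:metzler-hurwitz:2}(\ref{fact:Metzler-Hurwitz:2-bis}) applied to $M$), will force $\alpha<0$: assuming $\alpha\geq 0$ gives $Mw\geq\vect 0_n$, whence $w=M^{-1}(Mw)\leq\vect 0_n$ componentwise, so $w=\vect 0_n$, contradicting $w\neq\vect 0_n$. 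Hence $-MH^{-1}$ is Hurwitz. The delicate point of the proof is thus the appeal to the Perron--Frobenius property, which although standard is not among the three equivalences recorded in Lemma~\ref{thm:metzler-hurwitz:2} and therefore warrants either a citation or a brief justification via the shift $(-MH^{-1})+sI\geq 0$.
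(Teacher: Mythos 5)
Your proof is correct. Note that the paper itself does not prove this lemma: it states it as a slight restatement of \cite[Lemma 5]{PvdD-JW:02} and explicitly omits the proof, and the original argument there is phrased in M-matrix language (a Z-matrix is a nonsingular M-matrix if and only if its inverse is nonnegative, essentially an exercise from the M-matrix literature). Your route is a genuinely self-contained alternative built from the paper's own toolkit: for $(2)\Rightarrow(1)$ you use the Rantzer-type condition of Lemma~\ref{thm:metzler-hurwitz:2} together with the observation that $-H^{-1}\ge 0$ is invertible and hence maps positive vectors to positive vectors, which is clean and airtight; for $(1)\Rightarrow(2)$ you use the Perron--Frobenius property of the Metzler matrix $-MH^{-1}$ (spectral abscissa $\alpha$ attained at a real eigenvalue with eigenvector $v\ge\vect 0_n$, $v\ne\vect 0_n$), push it through $w=-H^{-1}v\ge\vect 0_n$, $w\ne\vect 0_n$, and derive the contradiction $w\le\vect 0_n$ from $Mw=\alpha v\ge\vect 0_n$ and $-M^{-1}\ge 0$ when $\alpha\ge 0$ (the contradiction indeed uses both $w\ge\vect 0_n$ and $w\le\vect 0_n$, which you have). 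The only external ingredient, Perron--Frobenius for Metzler matrices via the shift $-MH^{-1}+sI\ge 0$, is the same fact the paper itself invokes in the proof of Lemma~\ref{lem:stability} for the nonnegative matrix $-EH^{-1}$, so flagging it for a citation or a one-line shift argument, as you do, is exactly right. What your approach buys is independence from the M-matrix formalism of the cited reference; what the citation buys the paper is brevity.
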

	
	\noindent
	The second result is abstracted from the proof of \cite[Theorem 2]{PvdD-JW:02} and we 
	include a self-contained proof.
	
	\begin{lemma}[Stability of Perturbed Metzler Matrices] \label{lem:stability}
		Let $H \in \real^{n \times n}$ be Metzler and Hurwitz, and let $E \in \real_{\ge 
			0}^{n \times n}$ be a non-negative perturbation matrix. The following are 
		equivalent:
		\begin{enumerate}
			\item $H + E$ is Hurwitz, and
			\item $\rho(-E H^{-1}) < 1$.
		\end{enumerate}
	\end{lemma}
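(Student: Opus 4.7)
The plan is to apply Lemma~\ref{lem:exercise} to the pair $(H, H+E)$, and then reduce the resulting stability condition to a spectral radius condition via Perron--Frobenius.

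First I would verify the hypotheses of Lemma~\ref{lem:exercise} for $M = H+E$. Since $H$ is Metzler and $E \ge 0$, the sum $H+E$ is Metzler. By Lemma~\ref{thm:metzler-hurwitz:2} applied to $H$, we have $-H^{-1} \ge 0$, hence the product $-EH^{-1} = E(-H^{-1})$ is a non-negative matrix. Then
\[
-(H+E)H^{-1} = -I - EH^{-1}
\]
has non-negative off-diagonal entries (the off-diagonal entries of $-I$ are zero, and those of $-EH^{-1}$ are non-negative), so it is Metzler. Therefore Lemma~\ref{lem:exercise} applies and yields
\[
H+E \text{ is Hurwitz} \iff -I - EH^{-1} \text{ is Hurwitz}.
\]

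Next I would reduce the Hurwitz condition on $-I - EH^{-1}$ to a spectral radius condition on $-EH^{-1}$. The eigenvalues of $-I - EH^{-1}$ are exactly $-1 + \mu$ as $\mu$ ranges over the spectrum of the non-negative matrix $A := -EH^{-1}$. Thus $-I - EH^{-1}$ is Hurwitz iff $\operatorname{Re}(\mu) < 1$ for every eigenvalue $\mu$ of $A$.

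The final step, and the only subtle one, is to argue that for the non-negative matrix $A$, the condition ``$\operatorname{Re}(\mu) < 1$ for every eigenvalue $\mu$'' is equivalent to $\rho(A) < 1$. The forward direction is immediate: if $\rho(A) < 1$, then $|\mu| \le \rho(A) < 1$ for every eigenvalue, so in particular $\operatorname{Re}(\mu) < 1$. For the converse, one invokes Perron--Frobenius: since $A \ge 0$, its spectral radius $\rho(A)$ is itself an eigenvalue of $A$, and it is real and non-negative. If all eigenvalues have real part less than $1$, then in particular $\rho(A) < 1$. Combining this equivalence with the chain derived from Lemma~\ref{lem:exercise} completes the proof. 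The main obstacle is this last reduction, which relies on the Perron--Frobenius property that $\rho(A)$ is itself attained as an eigenvalue of the non-negative matrix $A$; without that fact, only one implication (the forward one) would be clear.
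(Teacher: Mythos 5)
Your proof is correct and follows essentially the same route as the paper: apply Lemma~\ref{lem:exercise} to $-(H+E)H^{-1} = -(I_n + EH^{-1})$ (using Lemma~\ref{thm:metzler-hurwitz:2} to get $-EH^{-1} \ge 0$), then use Perron--Frobenius on the non-negative matrix $-EH^{-1}$ to convert the Hurwitz condition into $\rho(-EH^{-1}) < 1$. Your version just spells out the eigenvalue-shift and hypothesis-checking steps a bit more explicitly than the paper does.
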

	
	\begin{proof}
		Let $A = -(H + E)H^{-1} = -(I_n + E H^{-1})$. 
		Note that $A$ is Metzler, since $-H^{-1} \ge 0$ by Lemma 
		\ref{thm:metzler-hurwitz:2}, so $-E H^{-1} \ge 0$. Then by Lemma 
		\ref{lem:exercise}, $H + E$ is 
		Hurwitz if and only if $A$ is Hurwitz. If $\rho(-E 
		H^{-1}) < 1$, then $A$ is clearly Hurwitz. But if $\rho(-E H^{-1}) \ge 1$, then 
		$A$ is not Hurwitz: since $-E H^{-1} \ge 0$, the Perron-Frobenius theorem 
		guarantees that 
		its dominant eigenvalue is real and non-negative, so $-(I_n + E H^{-1})$ has an 
		eigenvalue with non-negative real part.
	\end{proof}

%        {\color{blue}
%	In \cite{PvdD-JW:02}, van den Driessche and Watmough use Lemma \ref{lem:stability} to 
%	prove the classic result that a equilibrium is locally asymptotically stable if and 
%	only if $R_0 < 1$. Here we will use the lemma for another closely related purpose: 
%	providing novel expressions for $R_0$, including one that can be readily embedded in 
%	a geometric program.  (USELESS, remove?)}
	
	\section{Optimization Framework for $R_0$}
	\label{sect:opt}
	
	\subsection{Geometric Program for $R_0$}
	\label{sect:char}
	The main theoretical result of this paper is the following theorem, which provides two novel 
	characterizations of $R_0$:
	
	\begin{theorem}[Characterizations of $R_0$] \label{thm:char}
		Consider the linearized epidemic dynamics \eqref{eq:linear} with $F \in 
		\real_{\ge 0}^{n \times n}$ and $V \in \real^{n \times n}$ Hurwitz and Metzler. 
		Write $V = V_{od} - V_d$, where $V_d \ge 0$ is diagonal and $V_{od} \ge 0$ has 
		zero diagonal. The following are characterizations of the basic 
		reproduction number:
		\begin{enumerate} 
			\item \textit{Stability characterization}:
			\begin{equation} \label{eq:char-stab}
				R_0 = \inf_{r > 0} \{r : F + rV ~\text{is Hurwitz} \}
			\end{equation}
			\item \textit{Geometric program characterization}:
			\begin{equation} \label{eq:char-geo}
				R_0 = \inf_{\substack{r > 0 \\ w > \vect 0_n}} \! \left\{
				r : \diag(r V_d w)^{-1} (F + r V_{od}) w \le \vect 1_n
				\right\}
			\end{equation}
		\end{enumerate} 
	\end{theorem}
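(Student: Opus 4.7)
The plan is to derive both characterizations from Lemmas \ref{thm:metzler-hurwitz:2} and \ref{lem:stability}, leveraging the fact that for every $r > 0$, the matrix $F + rV$ is Metzler (since $F \ge 0$ and $V$ is Metzler).

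For the stability characterization \eqref{eq:char-stab}, I would apply Lemma \ref{lem:stability} with $H := rV$ (Metzler and Hurwitz for $r > 0$) and $E := F$. The lemma yields that $F + rV$ is Hurwitz if and only if $\rho(-F(rV)^{-1}) = \rho(-FV^{-1})/r < 1$. Because $-V^{-1} \ge 0$ by Lemma \ref{thm:metzler-hurwitz:2}, the non-negative matrix $-FV^{-1}$ has spectral radius equal to its dominant non-negative eigenvalue; since the eigenvalues of $-FV^{-1}$ are the negatives of those of $FV^{-1}$, this spectral radius coincides with $\rho(FV^{-1}) = R_0$. Hence $F + rV$ is Hurwitz exactly when $r > R_0$, so the infimum equals $R_0$.

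For the geometric program characterization \eqref{eq:char-geo}, I would first rewrite the constraint by multiplying on the left by the positive diagonal matrix $\diag(rV_d w)$ (noting that Metzler-Hurwitz of $V$ forces $V_d > 0$, as the component of $Vw < \vect 0_n$ at a $w > \vect 0_n$ implies $V_{ii} < 0$). The constraint becomes $(F + rV_{od})w \le rV_d w$, i.e., $(F + rV)w \le \vect 0_n$. Let $S$ denote the resulting feasible set of $r$. For $r > R_0$, the stability characterization gives that $F + rV$ is Metzler and Hurwitz, so Lemma \ref{thm:metzler-hurwitz:2} yields $w > \vect 0_n$ with $(F + rV)w < \vect 0_n$; hence $r \in S$ and $\inf S \le R_0$. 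For the reverse inequality, if $r \in S$ with certificate $w$, left-multiplying $(F + rV)w \le \vect 0_n$ by the non-negative matrix $-V^{-1}$ preserves the direction of the inequality and yields $(-V^{-1}F)w \le rw$; since $-V^{-1}F$ is similar via $V$ to $-FV^{-1}$, one has $\rho(-V^{-1}F) = R_0$, and a Collatz--Wielandt-type bound then gives $R_0 \le r$.

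I expect the main obstacle to be this final step, where I extract the spectral radius bound $\rho(-V^{-1}F) \le r$ from the componentwise inequality $(-V^{-1}F)w \le rw$ with $w > \vect 0_n$. The cleanest argument contracts against a non-negative left Perron eigenvector of $-V^{-1}F$, but such an eigenvector is only guaranteed to be strictly positive in the irreducible case. The standard remedy is to perturb $-V^{-1}F$ by a small strictly positive rank-one matrix to force irreducibility, derive the bound, and then let the perturbation tend to zero, using continuity of the spectral radius to conclude.
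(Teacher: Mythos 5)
Your proof is correct, and for the geometric-program characterization it takes a genuinely different route from the paper. The stability characterization is argued exactly as in the paper (Lemma \ref{lem:stability} with $H = rV$, $E = F$, plus homogeneity of the spectral radius), so there is nothing to add there. For \eqref{eq:char-geo}, the paper also reduces the constraint to $(F+rV)w \le \vect 0_n$, but then proves the equality of infima by comparing the strict-inequality feasibility set ($(F+rV)w < \vect 0_n$, which by Lemma \ref{thm:metzler-hurwitz:2} encodes Hurwitzness) with the relaxed one via a separate continuity/perturbation argument, the Relaxing Lemma (Lemma \ref{lem:relax} in Appendix \ref{app:relax}). You instead prove the lower bound directly: any weakly feasible pair $(r,w)$ gives $(-V^{-1}F)w \le rw$ with $w > \vect 0_n$ after multiplying by $-V^{-1}\ge 0$, and a Collatz--Wielandt-type subinvariance bound plus the similarity $\rho(-V^{-1}F)=\rho(-FV^{-1})=R_0$ yields $r \ge R_0$; the upper bound follows from Lemma \ref{thm:metzler-hurwitz:2} applied to $F+rV$ for $r > R_0$. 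This bypasses the Relaxing Lemma entirely and is shorter; the paper's route, by contrast, makes explicit how closed-constraint feasible points are limits of strictly feasible ones, which is the flavor of argument reused in the transcription theorem. One remark on the step you flag as the main obstacle: no irreducibility (and hence no perturbation) is needed. For a nonnegative $A$ and strictly positive $w$ with $Aw \le rw$, either pair the inequality with a nonzero nonnegative left eigenvector $v$ of $A$ associated with $\rho(A)$ (which exists for every nonnegative matrix), noting $v^\top w > 0$ automatically because $w > \vect 0_n$; or observe that $\diag(w)^{-1}A\diag(w)$ has row sums at most $r$, so $\rho(A) \le r$ by the row-sum bound. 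Your perturbation fallback is also valid, just unnecessary.
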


	\begin{proof}
		To prove that \eqref{eq:char-stab} follows from \eqref{eq:char-ngm}, we compute
		\begin{align*}
			\inf_{r > 0} \{r: F + rV ~\text{is Hurwitz}\}
			&= \inf_{r > 0} \{r : \rho(F(rV)^{-1}) < 1 \} \\
			&= \inf_{r > 0} \{r : \rho(F V^{-1}) < r \} \\
			&= \inf_{r > 0} \{r : R_0 < r\} =  R_0,
		\end{align*}
		where the first step follows from Lemma \ref{lem:stability}. We now use 
		\eqref{eq:char-stab} to prove \eqref{eq:char-geo}. Let $W = \{w > 
		\vect 0_n : V w < \vect 0_n\}$ and $\hat W = \{w > \vect 0_n : V w \le \vect 
		0_n\}$. By Lemma \ref{lem:relax} (in Appendix \ref{app:relax}),
		\begin{align*}
			R_0 &= \inf \{r > 0 : F + rV~\text{is Hurwitz}\} \\
			&= \inf \{r > 0 : \exists w \in W ~\text{s.t.}~ (F + r V)w < \vect 
			0_n\} \\
			&= \inf \{r > 0 : \exists w \in \hat W ~\text{s.t.}~ (F + r V)w \le \vect 
			0_n\} \\
			&= \inf_{r > 0, \; w > \vect 0_n} \left\{r : (F + r V)w \le \vect 0_n \right\}
		\end{align*}  
		In the last step, we note that the $V w \le \vect 
		0_n$ constraint is implied by $(F + r V)w \le \vect 0_n$, so we are free to 
		remove it. Manipulating the 
		$(F + r V)w \le \vect 0_n$ constraint into the standard form for geometric 
		programming yields \eqref{eq:char-geo}. 
	\end{proof} 

	\begin{remark}[Degenerate Cases, Pt. I] \label{rem:not-attained}
	The infimum in \eqref{eq:char-geo} is not always attained. For example, if 
	$F = \begin{bmatrix}0&0\\1&1\end{bmatrix}$ and $V = -\begin{bmatrix}1&0\\0&1 
	\end{bmatrix}$, then $
		R_0 = \inf_{\substack{r> 0 \\ w > \vect 0_2}} \left\{
			r : r \ge \frac{w_1 + w_2}{w_2}
		\right\} = 1$.
	But there is no feasible point $w > \vect 0_2$ that satisfies the inequality 
	constraint with $r = 1$. Thus, in general, we cannot replace the infimum in 
	\eqref{eq:char-geo} with a minimum. 
	\end{remark}  

%	\begin{example}[Multipatch SIR Model]
%		Consider an SIR model with $n$ population patches, governed by the dynamics
%		\begin{align*}
%			\dot s &= -\beta \diag(s) A i \\
%			\dot i &= \beta \diag(s) A i - \gamma i \\
%			\dot r &= \gamma i
%		\end{align*}
%		where $A \in \real^{n \times n}_{\ge 0}$ is a matrix of contact rates, $\beta > 
%		0$ is the transmission rate, and $\gamma > 0$ is the recovery rate. The NGM 
%		parameters for this model (linearizing about $i = \vect 0_n$ and $s = s_0$) are 
%		$F = \beta \diag(s_0) A$ and $V = -\gamma I_n$, so the geometric program to 
%		compute $R_0$ can be written
%		\[
%			R_0 = \inf_{r > 0, w > 0} \left\{
%				r : \frac{\beta}{\gamma} \diag(s_0) A w \le r w
%			\right\}
%		\] 
%		Since the matrix $\beta \diag(s_0) A / \gamma$ is non-negative, its 
%		Perron-Frobenius eigenvalue is precisely this infimum, so 
%		we recover the known result that $R_0 = \beta \lambda_{\rm 
%		max}(\diag(s_0) A) / \gamma$ \cite{WM-SM-SZ-FB:16f}.  
%	\end{example} 
 
	\subsection{Optimal Resource Allocation}
	\label{sect:alloc}
	
	The geometric program characterization \eqref{eq:char-geo} sets us up to
        efficiently optimize model parameters to either minimize or constrain
        $R_0$. In a manner analogous to \cite{VMP-MZ-CE-AJ-GJP:14}, we consider
        two forms of the resource allocation problem: \textit{$R_0$-constrained
          allocation}, and \textit{budget-constrained allocation}. In both forms
        of the resource allocation problem, we suppose that the model parameters
        $F$, $V_{od}$, and $V_d$ depend on a vector of ``resources'' $\theta \ge
        \vect 0_k$, and that the cost of a particular allocation of resources is
        given by a cost function $c(\theta)$. Furthermore, the resources must
        satisfy some collection of constraints $h(\theta) \le \vect 1_q$. The
        dependence on $\theta$ must obey the following conditions:
	
	\begin{assumption}[Resource Dependence] \label{ass:geometric}
		The resource dependence of the parameters $F(\theta)$, $V_{od}(\theta)$, 
		$V_d(\theta)$, $c(\theta)$, and $h(\theta)$ have the following properties:
		\begin{enumerate}
			\item \label{cond:posy} $F(\theta)$, $V_{od}(\theta)$, $c(\theta)$, and 
			$h(\theta)$ are element-wise posynomial functions;
			\item \label{cond:mono} $V_d(\theta)$ is an element-wise monomial function; 
			and
			\item \label{cond:domain} the set of feasible allocations $\{\theta \ge \vect 
			0_k : h(\theta) \le \vect 1_q\}$ is bounded, and if $\theta$ is in this set, 
			then $V_{od}(\theta) - V_d(\theta)$ is Hurwitz. 
		\end{enumerate}  
	\end{assumption} 

	\noindent
	Conditions~\ref{cond:posy} and \ref{cond:mono} are necessary to transcribe the allocation 
	problem as a geometric program, while condition~\ref{cond:domain} ensures that the matrix 
	parameters $F$, $V_{od}$, and $V_d$ satisfy the antecedent of Theorem \ref{thm:char} 
	for any feasible allocation. Condition~\ref{cond:domain} also ensures the feasible 
	$\theta$ are confined to a compact set. Under these assumptions, for all $\theta \in 
	h_{\le}^{-1}(\vect 1_q)$, the resource dependence of $R_0$ can be written as
	\begin{equation} \label{eq:r0-theta}
		R_0(\theta) = \rho\left( 
		F(\theta) (V_{od}(\theta) - V_d(\theta))^{-1}\right).
	\end{equation}
	
	Additional resources will typically reduce the rate of new infections or increase 
	the rate at which existing infections are removed. This property is not included in 
	Assumption \ref{ass:geometric}, since it is not needed for any of the results in this 
	section. However, if this property is true, then it is useful (albeit unsurprising) 
	to note that $R_0(\theta)$ is weakly decreasing in $\theta$.
	
	\begin{lemma}[Monotonicity] \label{lem:mono}
	  Suppose that $F(\theta)$, $V_{od}(\theta)$, and $V_d(\theta)$ satisfy
          Assumption~\ref{ass:geometric}. If additionally $F(\theta)$ and
          $V_{od}(\theta)$ are non-increasing and $V_d(\theta)$ is
          non-decreasing in $\theta$, then for $\theta, \theta' \in
          h_{\le}^{-1}(\vect 1_q)$ with $\theta' \ge \theta$, we have
          $R_0(\theta') \le R_0(\theta)$.
	\end{lemma}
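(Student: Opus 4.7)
The plan is to use one of the two characterizations of $R_0$ in Theorem~\ref{thm:char} to express $R_0(\theta)$ as the infimum of a set $S(\theta) \subseteq \real_{>0}$, and then to show that $S(\theta) \subseteq S(\theta')$ whenever $\theta' \ge \theta$; monotonicity of the infimum immediately yields $R_0(\theta') \le R_0(\theta)$. I expect the stability characterization \eqref{eq:char-stab} to give the shortest argument, so I would structure the proof around the family of Metzler matrices $M(\theta, r) := F(\theta) + r V(\theta) = F(\theta) + r V_{od}(\theta) - r V_d(\theta)$.

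First I would observe the entry-wise monotonicity: under the hypotheses, $F(\theta') \le F(\theta)$, $V_{od}(\theta') \le V_{od}(\theta)$, and $V_d(\theta') \ge V_d(\theta)$, so for every $r > 0$,
\begin{equation*}
  M(\theta', r) \le M(\theta, r) \quad \text{entry-wise},
\end{equation*}
and both matrices are Metzler (since $V_{od} \ge 0$ off-diagonal and the diagonal adjustment is arbitrary for Metzler matrices). Second, I would invoke the third item of Lemma~\ref{thm:metzler-hurwitz:2}: if $M(\theta, r)$ is Hurwitz, there exists $w > \vect 0_n$ with $M(\theta, r) w < \vect 0_n$, and since $w > \vect 0_n$ the entry-wise comparison gives $M(\theta', r) w \le M(\theta, r) w < \vect 0_n$, so $M(\theta', r)$ is Hurwitz as well. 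This shows the set inclusion
\begin{equation*}
  \{r > 0 : M(\theta, r)~\text{is Hurwitz}\} \subseteq \{r > 0 : M(\theta', r)~\text{is Hurwitz}\}.
\end{equation*}

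Finally, I would conclude by applying \eqref{eq:char-stab} to both sides: the infimum on the right is taken over a larger set, so $R_0(\theta') \le R_0(\theta)$. The only step that requires any care is confirming that the sets over which the infima are taken are nonempty at every feasible $\theta$, but this is guaranteed by Assumption~\ref{ass:geometric}\ref{cond:domain} together with the stability characterization applied at $\theta$ itself (which shows the defining set for $R_0(\theta)$ is nonempty, and hence so is the larger set for $R_0(\theta')$). I do not anticipate any genuine obstacle; the proof is essentially a one-line consequence of monotonicity of Hurwitzness along Metzler matrix orderings combined with Theorem~\ref{thm:char}. An alternative, equally short route would use \eqref{eq:char-geo} directly: any pair $(r, w)$ feasible for the GP at $\theta$ satisfies $(F(\theta') + rV_{od}(\theta'))w \le (F(\theta) + rV_{od}(\theta))w \le rV_d(\theta) w \le rV_d(\theta') w$, and hence is feasible at $\theta'$.
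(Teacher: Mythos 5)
Your proof is correct, but it follows a genuinely different route from the paper's. The paper argues directly from the next-generation-matrix formula \eqref{eq:char-ngm}: writing $F(\theta)=F(\theta')+\Delta F$ and $V(\theta)=V(\theta')+\Delta V$ with $\Delta F,\Delta V\ge 0$, it uses the identity $(V(\theta')+\Delta V)^{-1}-V^{-1}(\theta')=-(V(\theta')+\Delta V)^{-1}\Delta V\, V^{-1}(\theta')$ together with Lemma~\ref{thm:metzler-hurwitz:2} (the $-M^{-1}\ge 0$ item) to get the entrywise comparison $-F(\theta)V^{-1}(\theta)\ge -F(\theta')V^{-1}(\theta')\ge 0$, and then concludes by monotonicity of the spectral radius on non-negative matrices (Horn--Johnson, Thm.~8.1.18). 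You instead invoke the stability characterization \eqref{eq:char-stab} of Theorem~\ref{thm:char}: the entrywise ordering $M(\theta',r)\le M(\theta,r)$ of Metzler matrices plus the positive-vector certificate of Lemma~\ref{thm:metzler-hurwitz:2}(iii) gives the inclusion of the sets of Hurwitz-making $r$, and monotonicity of the infimum finishes the argument (your GP-feasibility variant via \eqref{eq:char-geo} is equally valid). Your technical care points are sound: Assumption~\ref{ass:geometric}(iii) gives Hurwitzness of $V(\theta')$ at the feasible point $\theta'$ so the characterization applies there, and nonemptiness of the smaller set follows since $R_0(\theta)$ is finite (indeed the set is $(R_0(\theta),\infty)$ by Lemma~\ref{lem:stability}). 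The trade-off: your argument is shorter, avoids the matrix-inverse manipulation and the external spectral-radius monotonicity citation, but leans on Theorem~\ref{thm:char} (legitimate, since the lemma appears after it); the paper's proof is independent of Theorem~\ref{thm:char} and yields the slightly stronger entrywise domination of the next-generation matrices as a by-product.
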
 
	
	\begin{proof}    
		Let $\theta' \ge \theta$. Since $0 \le F(\theta') \le F(\theta)$, 
		$0 \le V_{od}(\theta') \le V_{od}(\theta)$, and $V_d(\theta') \ge V_d(\theta) \ge 
		0$, we can write $F(\theta) = F(\theta') + \Delta F$ and $V(\theta) = V(\theta') 
		+ \Delta V(\theta)$ for some matrices $\Delta F, \Delta V \ge 0$. Then
		\begin{align*}
			V^{-1}(\theta) - V^{-1}(\theta') &= (V(\theta') + \Delta V)^{-1} - 
			V^{-1}(\theta') \\
			&= -(V(\theta') + \Delta V)^{-1} (\Delta V) V^{-1}(\theta') \\
			&\le 0 
		\end{align*}
		where the last inequality follows from Lemma \ref{thm:metzler-hurwitz:2}, since 
		$V(\theta)$ and $V(\theta')$ 
		are Hurwitz and Metzler, and thus $V^{-1}(\theta) \le 0$ and $V^{-1}(\theta') \le 
		0$. Then
		\begin{align*}
			-F(\theta) V^{-1}(\theta) &= -(F(\theta') + \Delta F) V^{-1}(\theta) \\
			&\ge -F(\theta') V^{-1}(\theta) \\
			&\ge -F(\theta') V^{-1}(\theta')
		\end{align*}
		Since $-F(\theta) V^{-1}(\theta) \ge 0$ and $-F(\theta') V^{-1}(\theta') \ge 0$, 
		we are guaranteed that
		\begin{align*}
			R_0(\theta) &= \rho(-F(\theta) V^{-1}(\theta)) \\
			&\ge \rho(-F(\theta') V^{-1}(\theta')) \\
			&= R_0(\theta')
		\end{align*}
		since the spectral radius is weakly increasing in the elements of a 
		non-negative matrix \cite[Theorem 8.1.18]{RAH-CRJ:12}. 
	\end{proof}
	
	We now define the two optimal allocation problems. In the $R_0$-constrained 
	allocation problem, we identify the cheapest allocation of 
	resources to ensure that $R_0 \le r_{\rm max}$, where $r_{\rm max} > 0$ is some 
	arbitrary threshold. In the budget-constrained allocation problem, some 
	budget $c_{\rm max} > 0$ is available to spend on resources, and we would like to 
	deploy these limited resources to minimize $R_0$. 
	
	\begin{definition}[Optimal Allocation Problems]
		Let $F(\theta)$, $V_{od}(\theta)$, $V_d(\theta)$, $c(\theta)$, and $h(\theta)$ 
		satisfy Assumption \ref{ass:geometric}. We define the following optimization 
		problems:
		\begin{enumerate}
			\item Given $r_{\rm max} > 0$, we say that $\theta^*$ is an optimal 
			$R_0$-constrained allocation if $\theta^*$ is a minimizer of 
			\begin{equation} \label{prob:r0-constrained-explicit}
				\min_{\theta \ge \vect 0_k} \left\{
				c(\theta) : h(\theta) \le \vect 1_q ~\text{and}~ R_0(\theta) \le 
				r_{\rm max}
				\right\}
			\end{equation}
			\item Given $c_{\rm max} > 0$, we say that $\theta^*$ is an optimal 
			budget-constrained allocation if $\theta^*$ is a minimizer of 
			\begin{equation} \label{prob:budget-constrained-explicit}
				\min_{\theta \ge \vect 0_k} \left\{
				R_0(\theta) : h(\theta) \le \vect 1_q ~\text{and}~ c(\theta) \le 
				c_{\rm max}
				\right\}
			\end{equation}
		\end{enumerate}
	\end{definition}  

	\noindent
	Assumption \ref{ass:geometric} ensures that $R_0(\theta)$ in 
	\eqref{eq:r0-theta} is well-defined over the feasible sets; furthermore,
	$R_0(\theta)$ is continuous, since the matrix inverse and spectral radius are 
	continuous functions of the matrix elements. Thus the feasible sets are compact, so 
	the minima of both problems exist. 
	
	Using Theorem \ref{thm:char}, we can construct a pair of geometric programs to solve 
	for optimal $R_0$-constrained and budget-constrained allocations. For notational 
	convenience, we define a map $p: \real_{> 0} \times \real_{> 0}^n \times \real_{\ge 
	0}^k \to \real_{> 0}^n$ by
	\begin{equation}
		p(r, w, \theta) = \diag(r V_d(\theta) w)^{-1} (F(\theta) + r V_{od}(\theta)) w 
	\end{equation}
	Under Assumption \ref{ass:geometric}, $p(r, w, \theta)$ is posynomial, so the 
	following are geometric programs:
	
	\begin{problem}[$R_0$-Constrained Allocation GP] \label{prob:r0-constrained}
		Given $r_{\rm max} > 0$ and a tolerance parameter $\tau \ge 0$:
		\begin{equation*} \label{eq:r0-constrained}
			\begin{array}{rl}
				\text{minimize}: & c(\theta) \\
				\text{variables}: & r > 0, \; w > \vect 0_n, \; \theta > \vect 0_k \\
				\text{subject to}: & p(r, w, \theta) \le \vect 1_n \\ 
				& h(\theta) \le \vect 1_q \\
				& r \le r_{\rm max} + \tau
			\end{array}
		\end{equation*} 
	\end{problem}   
	
	\begin{problem}[Budget-Constrained Allocation GP] \label{prob:budget-constrained}
		Given $c_{\rm max} > 0$:
		\begin{equation*}
			\begin{array}{rl}
				\text{minimize}: & r \\
				\text{variables}: & r > 0, \; w > \vect 0_n, \; \theta > \vect 0_k \\
				\text{subject to}: & p(r, w, \theta) \le \vect 1_n \\
				& h(\theta) \le \vect 1_q \\
				& c(\theta) \le c_{\rm max}
			\end{array}
		\end{equation*} 
	\end{problem}  

	\begin{theorem}[Geometric Program Transcription] \label{thm:transcription}
		Let $\theta^* \ge \vect 0_k$, $r_{\rm max} > 0$, and $c_{\rm max} > 0$. Let 
		$\mathcal F_1(\tau)$ for $\tau > 0$ and $\mathcal F_2$ be the sets of 
		feasible points $(r, w, \theta)$ for Problems \ref{prob:r0-constrained} and 
		\ref{prob:budget-constrained}. The following are true: 
		\begin{enumerate} 
			\item \label{prop:r0-constrained} $\theta^*$ is an optimal $R_0$-constrained 
			allocation if and only if the infimum of Problem \ref{prob:r0-constrained} 
			converges to $c(\theta^*)$ as $\tau \to 0_+$ and there exists $r^*, 
			w^*$ such that $(r^*, w^*, \theta^*) \in \cl(\mathcal F_1(\tau))$ for 
			all $\tau > 0$. 
			\item \label{prop:budget-constrained} $\theta^*$ is an optimal 
			budget-constrained allocation if and only if 
			$R_0(\theta^*)$ is the infimum of Problem \ref{prob:budget-constrained} and 
			there exists $r^*, w^*$ such that $(r^*, w^*, \theta^*) \in 
			\cl(\mathcal F_2)$.
		\end{enumerate}  
	\end{theorem}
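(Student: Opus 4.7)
The plan is to leverage Theorem \ref{thm:char}(2), which tells us that for any fixed $\theta$ satisfying Assumption \ref{ass:geometric},
\[
R_0(\theta) = \inf_{r > 0,\; w > \vect 0_n} \{ r : p(r, w, \theta) \le \vect 1_n \}.
\]
This reformulation turns the implicit constraint $R_0(\theta) \le \text{threshold}$ into an explicit posynomial constraint, so Problems \ref{prob:r0-constrained} and \ref{prob:budget-constrained} are essentially reformulations of \eqref{prob:r0-constrained-explicit} and \eqref{prob:budget-constrained-explicit}, modulo the boundary subtleties highlighted in Remark \ref{rem:not-attained}.

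I would tackle Part \ref{prop:budget-constrained} first since it is cleaner. For any $\theta$ feasible in \eqref{prob:budget-constrained-explicit}, the infimum of $r$ over $(r, w)$ with $(r, w, \theta) \in \mathcal{F}_2$ is exactly $R_0(\theta)$, so the infimum value of Problem \ref{prob:budget-constrained} equals $\inf_\theta R_0(\theta)$ over the budget-feasible set, which is the optimal value of \eqref{prob:budget-constrained-explicit}. For ($\Rightarrow$): optimality of $\theta^*$ matches this infimum; then using the degree-zero homogeneity of $p(r, w, \theta)$ in $w$, I normalize $\|w\|_1 = 1$, take any sequence $r_k \downarrow R_0(\theta^*)$ with feasible witnesses $w_k$, and extract a limit $w^* \ge \vect 0_n$ on the simplex to produce $(R_0(\theta^*), w^*, \theta^*) \in \cl(\mathcal{F}_2)$. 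For ($\Leftarrow$): membership in $\cl(\mathcal{F}_2)$ forces the closed conditions $c(\theta^*) \le c_{\max}$ and $h(\theta^*) \le \vect 1_q$, so $\theta^*$ is feasible, and the infimum hypothesis then certifies optimality.

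For Part \ref{prop:r0-constrained}, let $p^*(\tau)$ denote the optimal value of Problem \ref{prob:r0-constrained}. Using Theorem \ref{thm:char}(2) again, the set of $\theta$ participating in any $(r, w, \theta) \in \mathcal{F}_1(\tau)$ essentially coincides with $\{\theta : R_0(\theta) < r_{\max} + \tau\}$ (up to attainment at the boundary). The sandwich
\[
\{\theta : R_0(\theta) \le r_{\max}\} \;\subseteq\; \{\theta : R_0(\theta) < r_{\max} + \tau\} \;\subseteq\; \{\theta : R_0(\theta) \le r_{\max} + \tau\}
\]
combined with compactness of the feasible $\theta$-set (Assumption \ref{ass:geometric}(3)) and continuity of $R_0(\theta)$ from \eqref{eq:r0-theta} shows $p^*(\tau) \to$ optimal value of \eqref{prob:r0-constrained-explicit} as $\tau \to 0_+$, by extracting a convergent subsequence of near-minimizers $\theta_\tau$. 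The closure claim for $\theta^*$ is then built as in Part \ref{prop:budget-constrained}, taking $r^* = R_0(\theta^*) \le r_{\max} \le r_{\max} + \tau$ and a limiting $w^*$ from the normalized witnesses; both directions of the equivalence then follow by the same feasibility/optimality split as above.

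The main obstacle is precisely the attainment failure of Remark \ref{rem:not-attained}: the GP infimum characterizing $R_0(\theta)$ need not be realized by a $w^* > \vect 0_n$, which is what forces the tolerance $\tau$ into Problem \ref{prob:r0-constrained} and the closures into the theorem statement. The technical work therefore concentrates on (a) exploiting the homogeneity of $p$ in $w$ to normalize and compactify the $w$-space, (b) extracting boundary limits $w^* \ge \vect 0_n$ rather than strict positives, and (c) justifying the $\tau \to 0_+$ limit via the compactness of the feasible allocation set and continuity of $R_0(\theta)$.
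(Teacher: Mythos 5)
Your proposal is correct in substance and follows the same overall architecture as the paper (apply Theorem \ref{thm:char} pointwise in $\theta$, relate the explicit feasible sets to the GP feasible sets and their closures, and handle the $\tau \to 0_+$ limit via compactness of the allocation set and continuity of $R_0$ and $c$), but it differs in the key technical step of producing the closure points. The paper constructs an \emph{explicit} limiting witness: it writes $M(r,\theta) = F(\theta) + rV_{od}(\theta) - rV_d(\theta)$, factors out the root $r = R_0(\theta)$ of $\det M(r,\theta)$ via a pseudo-determinant, and defines $w^*(\theta)$ as the limit of $-(r-R_0(\theta))^{\ell} M^{-1}(r,\theta)z$ using the adjugate, which then feeds a feasibility lemma ($\mathcal G_1 = \bigcap_{\tau>0}\Theta_1(\tau)$, $\mathcal G_2 = \Theta_2$) and a nested-sets argument for $c^*(\tau)\to c^*$. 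You instead exploit the degree-zero homogeneity of $p(r,w,\theta)$ in $w$ to normalize witnesses onto the simplex and extract a convergent subsequence, and you obtain the $\tau\to 0_+$ limit by extracting convergent near-minimizers; this is more elementary and avoids the adjugate machinery entirely, at the cost of a non-constructive $w^*$. Two small points you should make explicit to close the argument: (i) the existence of a witness $w_k$ for each $r_k > R_0(\theta^*)$ follows because each component of $p(r,w,\theta)$ is nonincreasing in $r$ (so a witness at some $r' < r_k$ given by \eqref{eq:char-geo} remains feasible at $r_k$); and (ii) in the converse of part \ref{prop:r0-constrained}, feasibility of $\theta^*$ for \eqref{prob:r0-constrained-explicit} needs $R_0(\theta^*) \le r_{\rm max}$, which you get by noting that along any sequence in $\mathcal F_1(\tau)$ converging to $(r^*,w^*,\theta^*)$ one has $R_0(\theta_j)\le r_j \le r_{\rm max}+\tau$ by \eqref{eq:char-geo}, and then passing to the limit over $j$ and over $\tau$ using continuity of $R_0$ — the same device the paper packages into Lemma \ref{lem:feasible}.
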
  
	
	\noindent
	See Appendix \ref{app:transcription} for the proof.  
	
	We note that Problem \ref{prob:r0-constrained} is an arbitrarily accurate 
	approximation of the $R_0$-constrained allocation problem, controlled by the 
	parameter $\tau \ge 0$. This approximation is necessary due to the closed 
	inequality constraint on $R_0$ and the representation of $R_0$ by the infimum 
	in \eqref{eq:char-geo}, which is not always attained: 
	
	\begin{remark}[Degenerate Cases, Pt. II]
		In some cases, Problem \ref{prob:r0-constrained} may be infeasible when 
		$\tau = 0$, for example, if $F(\theta) = F$ and $V(\theta) = V$ are the 
		matrices defined 
		in Remark~\ref{rem:not-attained} and $r_{\rm max} = 1$. Fortunately, the feasible 
		set is nonempty for all $\tau > 0$, so we	can still consider the limit of 
		solutions to Problem \ref{prob:r0-constrained} as $\tau \to 0_+$.
		This feasibility problem arises due to the constraint on $R_0$, so it is not an 
		issue in Problem \ref{prob:budget-constrained}.  
	\end{remark} 

	In practice, the issue of an empty feasible set is not of significant concern, since 
	numerical optimization already has inherently limited precision. We suggest solving 
	Problem \ref{prob:r0-constrained} with $\tau = 0$ (and only using a small 
	positive value if the solver reports primal infeasibility). 

	\section{Numerical Examples} 
	\label{sect:numerical} 
	
	In the following experiments, we compare $R_0$-minimizing allocations with 
	abscissa-minimizing allocations.
	The code used to generate these results is 
	available online.\footnote{The MATLAB script and functions used to generate these 
	results is available at      
	\url{https://www.mathworks.com/matlabcentral/fileexchange/99354-geometric-programs-for-r0}.
       Running the code requires an installation of CVX 2.2 and the MOSEK solver.}
	
	\subsection{Epidemic Model}
	
	We adopt a standard multigroup SEIR model (with vital dynamics) for an epidemic in 
	the state of California, 
	where each group corresponds to one of the state's $n = 58$ counties. The SEIR model 
	has two infected states (exposed and infectious) and two non-infected states 
	(susceptible and recovered). Letting $s, e, z, r \in \real^n_{\ge 0}$ denote the 
	expected number of people in each group and disease state, the model dynamics for 
	each group $i\in\until{n}$ are
	\begin{align*}
		\dot s_i &= -\beta_i s_i \sum_{j = 1}^n a_{ij} z_j & \dot z_i &= \gamma_i e_i - 
		\delta_i z_i \\
		\dot e_i &= \beta_i s_i \sum_{j = 1}^n a_{ij} z_j - \gamma_i e_i & \dot r_i &= 
		\delta_i z_i 
	\end{align*}
	It is clear that the model has a disease-free equilibrium $(s_0, \vect 0_n, 
	\vect 0_n, \vect 0_n)$. Linearizing about this point, we obtain 
	\[
		\begin{bmatrix} \dot e \\ \dot z \end{bmatrix} \approx
		\begin{bmatrix}
			-\diag(\gamma) & \diag(\beta) \diag(s_0) A \\
			\diag(\gamma) & -\diag(\delta)
		\end{bmatrix} \begin{bmatrix} e \\ z \end{bmatrix}.
	\]
	Because the $\diag(\beta) \diag(s_0) A$ term is the only one corresponding to 
	the creation of new infections, we decompose this Jacobian into the two matrices
	\[
		F \!=\! \begin{bmatrix}0 & \diag(\beta) \diag(s_0) A  \\ 0 & 0 \end{bmatrix}, \;
	 V \!=\! \begin{bmatrix} -\diag(\gamma) & 0 \\ \!\!\diag(\gamma) & \!\!-\diag(\delta) 
	 \end{bmatrix},
	\]
	where $F$ is non-negative and $V$ is Hurwitz and Metzler.  

%	We selected baseline values roughly in 
%	agreement with the COVID-19 pandemic in the United States. However, we emphasize that 
%	the intention of this section is to showcase potential applications of our resource 
%	allocation framework, not to make specific predictions or policy recommendations 
%	regarding the current pandemic. 
	
	The model requires a matrix of inter-group contact rates $A \in \real_{\ge 0}^{n 
	\times n}$, which we estimated using data from 
	SafeGraph.\footnote{\href{https://www.safegraph.com}{SafeGraph} is a data company 
	that aggregates anonymized location data from numerous applications in order to 
	provide insights about physical places, via the SafeGraph Community. To enhance 
	privacy, SafeGraph excludes census block group information if fewer than two devices 
	visited an establishment in a month from a given census block group.} In particular, 
	we used the \href{https://docs.safegraph.com/docs/social-distancing-metrics}{Social 
	Distancing Metrics} dataset to estimate a matrix $P \in \real^{n \times n}$, where 
	$p_{ij}$ is the daily fraction of people from county $i$ who visited county $j$, 
	averaged over each day in 2020. Then $(P P^\top)_{ij}$ approximates the probability 
	that two random individuals from counties $i$ and $j$ are co-located in the same 
	county on a given day. We set $A = \alpha P P^\top$, where the 
	scalar $\alpha = 2.3667 \times 10^{-7}$ was chosen to ensure $R_0 = 2.5$ when 
	$\beta = 0.1$, $\gamma = 0.2$, and $\delta = 0.1$. Note that $\alpha$ is 
	always multiplied by $\beta$, so the only effect of this scalar is to allow us to 
	work with round numbers for $\beta$ and $R_0$.
	
	The remaining model parameters are the transmission rates $\beta > \vect 0_n$, 
	incubation rates $\gamma > \vect 0_n$, and recovery rates $\delta > \vect 0_n$ for 
	each group. We used uniform model parameters across each group for simplicity. We 
	generated 2,000 different models by choosing $\beta$, $\gamma$, and $\delta$ for each 
	of 10 ($\gamma$ and $\delta$) or 20 ($\beta$) evenly-spaced values in the range 
	$[0.025, 0.5]$, $[0.05, 0.5]$, and $[0.05, 
	0.5]$, respectively. The $\gamma$ and $\delta$ range was chosen to allow for a wide 
	range of mean incubation and recovery times (between 2 and 20 days), while 
	the $\beta$ range was coarsely tuned so that the models have a wide 
	but realistic range of pre-intervention $R_0$ (95\% between 0.23 and 19.38).

	\subsection{Optimal Allocation of Pharmaceuticals}

	We consider the following optimal resource allocation scenario from 
	\cite{VMP-MZ-CE-AJ-GJP:14}, in which there are two types of pharmaceutical 
	interventions: vaccines, which reduce the local 
	transmission rates $\beta_i$; and antidotes, which increase the local recovery rates 
	$\delta_i$. By allocating vaccines to patch $i$, we can optimize the local 
	transmission rate within a range $\beta_i \in [\underline \beta_i, \; \overline 
	\beta_i]$, where $\overline \beta_i \ge \underline \beta_i > 0$. The cost of this 
	vaccine allocation is, for all $i$, 
	\begin{equation}
		f_i(\beta_i) = \frac{\beta_i^{-1} - \overline 
		\beta_i^{-1}}{\underline\beta_i^{-1} - 
		\overline \beta_i^{-1}}.
		\label{eq:f}
	\end{equation}
	Note that the most aggressive allocation has a cost of $f_i(\underline \beta_i) = 1$, 
	while allocation of no vaccines at all has a cost $f_i(\overline \beta_i) = 0$. The 
	form of \eqref{eq:f} ensures diminishing returns in the investment of vaccines at 
	each patch. Similarly, by allocating antidotes to patch $i$, the local recovery rate 
	can be 
	optimized in the range $\delta_i \in [\underline \delta_i, \; \overline \delta_i]$, 
	with $\overline \delta_i \ge \underline \delta_i > 0$. The cost of the antidote 
	allocation is, for all $i$,
	\begin{equation}
		g_i(\delta_i) = \frac{(\tilde \delta_i - \delta_i)^{-1} - (\tilde \delta_i - 
		\underline \delta_i)^{-1}}{(\tilde \delta_i - \overline \delta_i)^{-1} - (\tilde 
		\delta_i - \underline \delta_i)^{-1}},
		\label{eq:g}
	\end{equation}
	where the parameters $\tilde \delta_i > \overline \delta_i$ control the shape of the 
	cost curve. The total cost, summing over the local costs of vaccines and antidotes 
	over all patches, is constrained by a budget $c_{\rm max}$. 

	In order to perform budget-constrained resource allocation, we 
	must encode the following budget constraint in the standard form for geometric 
	programming:
	\[
		\sum_{i=1}^n f_i(\beta_i) + g_i(\delta_i) \le c_{\rm max}
	\]
	Since $g_i$ have non-posynomial dependence on $\delta_i$, we replace 
	$1 - \delta_i$ with auxiliary variables $\eta_i$, constrained by $\tilde \delta_i 
	- \overline \delta_i \le \eta_i \le \tilde \delta_i - \underline \delta_i$. Then 
	the posynomial budget constraint is
	\begin{equation}
		\sum_{i = 1}^n \frac{\kappa^{-1}\beta_i^{-1}}{\underline\beta_i^{-1} 
			- \overline \beta_i^{-1}} + \frac{\kappa^{-1} \eta_i^{-1}}{(\tilde 
			\delta_i - 
			\overline \delta_i)^{-1} - (\tilde \delta_i - \underline \delta_i)^{-1}} 
			\le 1
		\label{eq:budget-constraint}
	\end{equation}
	where we define a positive constant
	\[
	\kappa = c_{\rm max} + \sum_{i=1}^n \frac{\overline \beta_i^{-1}}{\underline 
		\beta_i^{-1} - \overline \beta_i^{-1}} + \frac{(\tilde \delta_i - \underline 
		\delta_i)^{-1}}{(\tilde \delta_i 
		- \overline \delta_i)^{-1} - (\tilde \delta_i - \underline \delta_i)^{-1}}
	\] 
	Altogether, the resource vector is $\theta^\top = \begin{bmatrix} \beta^\top & 
	\eta^\top \end{bmatrix}$, and the constraints are
	$\underline \beta_i \le \beta_i \le \overline \beta_i$, $\tilde 
	\delta_i - \overline \delta_i \le \eta_i \le \tilde \delta_i - \underline 
	\delta_i$, and \eqref{eq:budget-constraint}. 
	
	For each experiment, we selected cost parameters based on the pre-intervention SEIR 
	model parameters $\beta$ and $\delta$. Since pharmaceuticals and vaccines never 
	increase the transmission rate or decrease the recovery rate, we set
	$\overline \beta_i = \beta_i$ and $\underline \delta_i = \delta_i$. We chose
	$\underline \beta_i = 0.1 \beta_i$ and $\overline \delta_i = 2 \delta_i$ to reflect a 
	90\% reduction in transmissibility and 50\% reduction in mean recovery time at 
	maximum investment, and we selected $\tilde \delta_i = 2$ so that $\tilde \delta_i > 
	\overline \delta_i$.
	
	\subsection{Results and Discussion}  

	We first set a budget of $c_{\rm max} = 0.1$ and performed budget-constrained 
	resource allocation to minimize $R_0$ and the abscissa for each of the 2,000 models. 
	We then simulated the nonlinear post-intervention dynamics for both the 
	$R_0$-minimized and abscissa-minimized models until convergence.
	
	\begin{figure}
		\centering
		\includegraphics[width=0.49\linewidth]{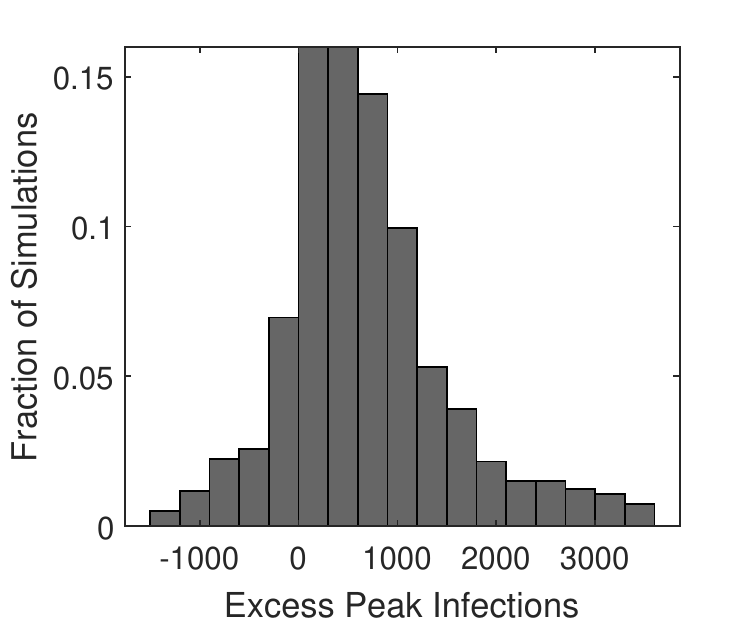}
		\includegraphics[width=0.49\linewidth]{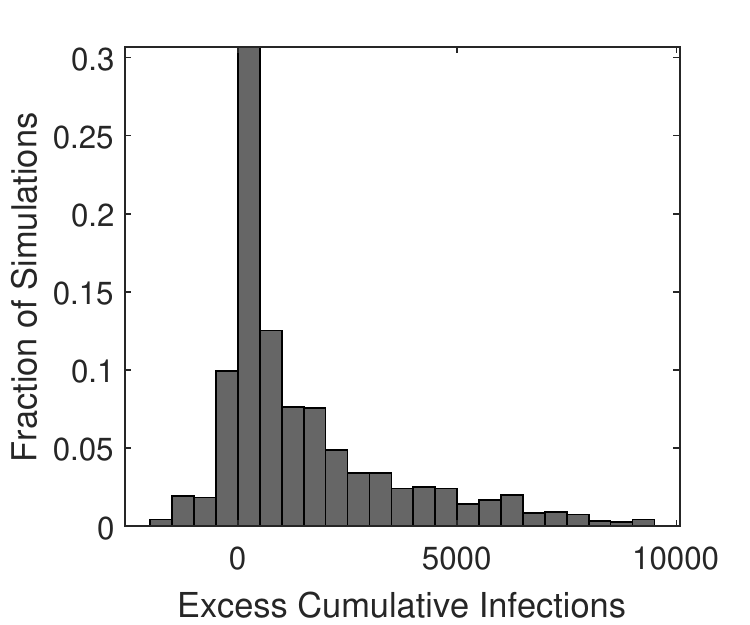}
		\caption{Comparison of peak (left) and cumulative (right) infections from 
		minimizing $R_0$ vs. the abscissa, in models with an initial exponential growth 
		phase. Both histograms show the 
		distribution of how many more infections resulted in the absicssa-minimizing 
		scenario vs. the $R_0$-minimizing scenario.}
		\label{fig:histograms}
	\end{figure}
	
	In 1,270 models, both the $R_0$-minimized and abscissa-minimized models had $R_0 > 
	1$, so the number of infected individuals experienced an initial exponential growth 
	phase before peaking and decaying. Figure \ref{fig:histograms} (left) compares the 
	number of active infections at the peak between the $R_0$-minimized and 
	abscissa-minimized trajectories. In 1,068 (84.1\%) of these models, minimizing $R_0$ 
	led to a smaller peak than minimizing the absicssa. Similarly, Figure 
	\ref{fig:histograms} (right) compares the number of cumulative infections at the end 
	of the simulation. Minimizing $R_0$ resulted in fewer cumulative cases in 1,056 
	(83.1\%) in the example models. In the remaining models, one or both of the 
	$R_0$-minimizing or abscissa-minimizing 
	allocations led to $R_0 < 1$, so the trajectory immediately decays toward a 
	disease-free equilibrium. It is not meaningful to compare peaks in these models; 
	however, in 96.4\% of them, minimizing $R_0$ resulted in fewer cumulative infections.

	\begin{figure}
		\centering
		\includegraphics[width=0.49\linewidth]{./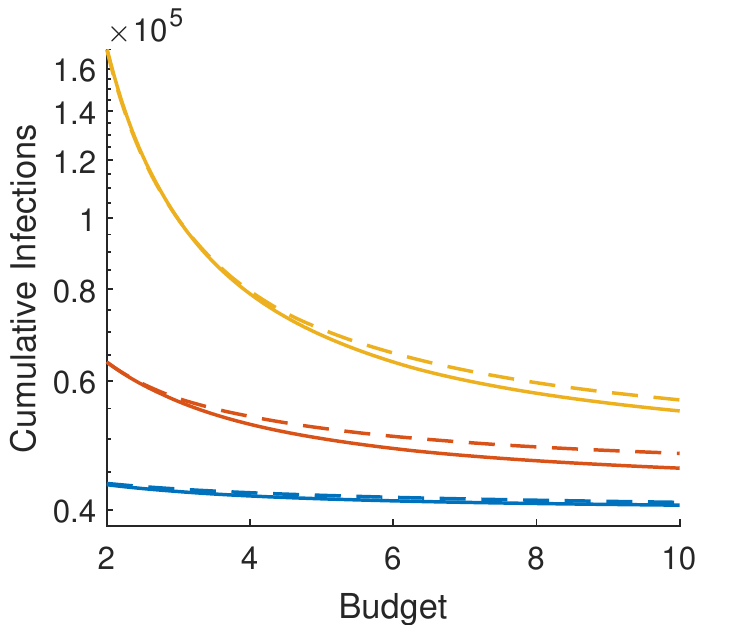}%
		\includegraphics[width=0.49\linewidth]{./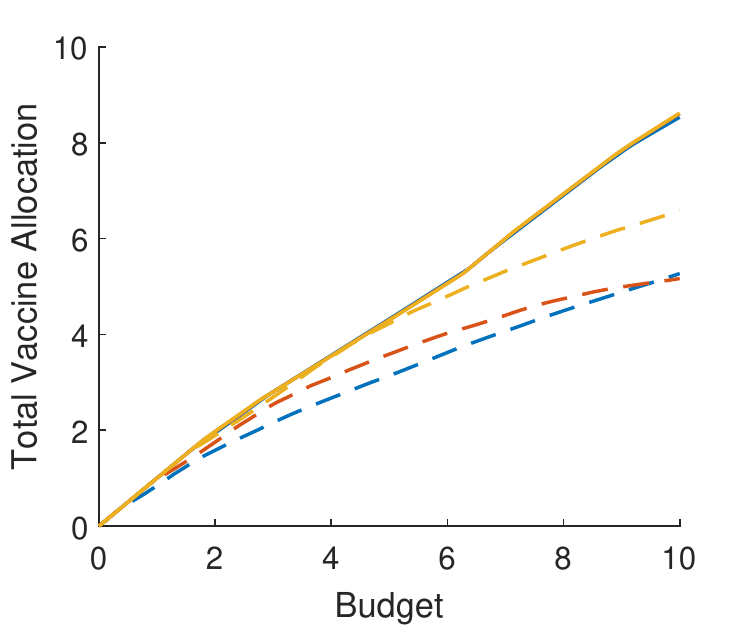}
		\includegraphics[width=0.9\linewidth, clip, trim={2in 5.2in 2in 
		5.2in}]{./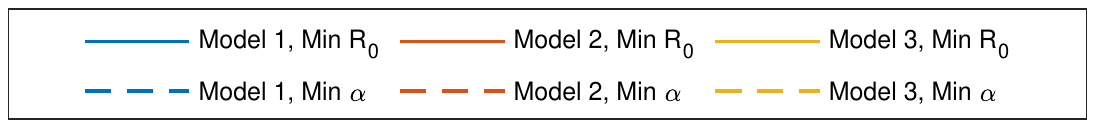}
		\caption{Cumulative infections (left) and total budget allocated to vaccines 
		(right) for three models, given various budgets. Solid lines correspond to 
		post-intervention models minimizing $R_0$, while dashed lines reflect minimizing 
		the abscissa. In these examples, minimizing $R_0$ results in fewer cumulative 
		infections and a greater fraction of the budget allocated to vaccines.}
		\label{fig:budgets}
	\end{figure}
	
	Next, we selected three particular models to examine the allocations under various 
	budgets. We chose a low-$R_0$ model ($\beta = 0.05$, $\gamma = 0.2$, $\delta = 0.2$; 
	$R_0 = 0.625$), a mid-$R_0$ model ($\beta = 0.1$, $\gamma = 0.2$, $\delta = 0.1$; 
	$R_0 = 2.5$), and a high-$R_0$ model ($\beta = 0.15$, $\gamma = 0.2$, $\delta = 
	0.075$; $R_0 = 5.0$), and we repeated the budget-constrained allocations at various 
	budgets. Figure \ref{fig:budgets} (left) plots the 
	cumulative infections for the post-intervention models. Cumulative infections in the 
	$R_0$-minimized and abscissa-minimized models are very similar at low budgets, but 
	past a budget of 2, minimizing the $R_0$ leads to a modest decrease in 
	cumulative infections when compared to minimizing the abscissa. (It is not meaningful 
	to plot the peak infections, since $R_0 < 1$ in all post-intervention models with 
	budgets above 2.) Figure \ref{fig:budgets} (right) illustrates a difference in 
	allocation strategies between the two targets, as minimizing $R_0$ 
	results in a larger share of the budget spent on vaccines. 
	
	\section{Conclusion}
	
	In this note, we have established a new formula for the basic reproduction number of 
	a compartmental epidemic model.  We then applied this formula to resource 
	allocation problems that minimize or constrain $R_0$, transcribing these 
	problems as geometric programs, and we have provided numerical experiments to 
	highlight that targeting $R_0$ instead of the abscissa 
	can result in qualitatively different solutions. Our results show that 
	$R_0$ can be a superior target for controlling cumulative and peak infections; 
	however, more work is needed to identify for which models and parameter ranges this 
	is the case. The possible applications of our optimization framework are broad, 
	since it applies to a general class of epidemic models and cost functions.
	Policymakers should be aware of the limitations of optimal resource 
	allocation: models (and linear models in particular) have limited accuracy, and 
	mathematics does not address the complex social factors of epidemic response. 
	Nonetheless, we believe that this work and its future extensions---coupled with 
	judicious choices of models and cost functions---can provide useful 
	insight for epidemic preparedness and response.  
	
	\appendices 
	
	\section{A Relaxing Lemma}
	\label{app:relax}  
	
	\begin{lemma}[A Relaxing Lemma] \label{lem:relax}
		Let $V \in \real^{n \times n}$ be a Metzler and Hurwitz matrix, and let $F \ne 0$ 
		be a non-negative matrix of the same shape.
		Let $W = \{w > \vect 0_n : V w < \vect 0_n\}$, $\hat W = \{w > \vect 0_n : V w 
		\le \vect 0_n\}$, $R_0 = \inf \{r > 0 : \exists w \in W ~\text{s.t.}~ (F + r V)w 
		< \vect 0_n\}$, and $\hat R_0 = \inf \{r > 0 : \exists w \in \hat W ~\text{s.t.}~ 
		(F + r V)w \le \vect 0_n\}$. Then $R_0 = \hat R_0$.  
	\end{lemma}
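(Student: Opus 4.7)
The plan is to dispatch the easy direction $R_0 \ge \hat R_0$ by set inclusion and then to establish $R_0 \le \hat R_0$ by combining a Perron--Frobenius bound on $-V^{-1}F$ with Lemma \ref{lem:stability}.

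The inequality $R_0 \ge \hat R_0$ is immediate, since $W \subseteq \hat W$ and every strict witness $(F+rV)w < \vect 0_n$ also satisfies the non-strict inequality; hence the feasible set of the strict infimum is contained in that of the non-strict one.

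For the reverse inequality, I would fix any $r > 0$ admitting a witness $w > \vect 0_n$ with $(F+rV)w \le \vect 0_n$ (so that $r$ belongs to the set whose infimum is $\hat R_0$). The pivotal step is to left-multiply by the non-negative matrix $-V^{-1}$, which is non-negative by Lemma \ref{thm:metzler-hurwitz:2} since $V$ is Metzler and Hurwitz. This yields $Nw \le rw$ with $N := -V^{-1}F \ge 0$. A standard Perron--Frobenius argument---pairing $Nw \le rw$ against a non-negative left Perron eigenvector of $N$ and using the strict positivity of $w$ to divide out by the resulting positive scalar---then gives $\rho(N) \le r$. Since $N = V^{-1}(-FV^{-1})V$ is similar to $-FV^{-1}$, this reads $\rho(-FV^{-1}) \le r$.

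With $\rho(-FV^{-1}) \le r$ in hand, for any $r' > r$ Lemma \ref{lem:stability} (applied with Metzler--Hurwitz $H = r'V$ and non-negative $E = F$) makes $F + r'V$ Hurwitz, because $\rho(-F(r'V)^{-1}) = \rho(-FV^{-1})/r' \le r/r' < 1$. Lemma \ref{thm:metzler-hurwitz:2} then supplies $w' > \vect 0_n$ with $(F+r'V)w' < \vect 0_n$, and the componentwise inequality $r'Vw' < -Fw' \le \vect 0_n$ automatically places $w' \in W$. Hence every $r' > r$ is feasible for the strict problem, so $R_0 \le r$; since this holds for every $r$ in the feasible set of $\hat R_0$, the conclusion $R_0 \le \hat R_0$ follows.

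The main obstacle I anticipate is the Perron--Frobenius step: $N$ need not be irreducible, so its right Perron eigenvector could have zero entries and cannot directly play the role of $w$. The bound $\rho(N) \le r$ must therefore be extracted by pairing $Nw \le rw$ against a non-negative \emph{left} Perron eigenvector of $N$, leveraging $w > \vect 0_n$ to guarantee that the resulting scalar pairing is strictly positive. Once that algebraic fact is secured, the remainder of the argument is routine bookkeeping between Lemmas \ref{thm:metzler-hurwitz:2} and \ref{lem:stability}.
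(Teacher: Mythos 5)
Your proof is correct, but it takes a genuinely different route from the paper's. The paper argues directly at the level of the witness vectors: it writes both infima as infima of the ratio function $r^*(w) = \max_{i} (Fw)_i/|Vw|_i$ over the sets $W$ and a slightly enlarged $\bar W$, and then, given a non-strict witness $\hat w$, perturbs it along a direction $x>\vect 0_n$ with $Vx<\vect 0_n$ to produce strict witnesses $w(t)=\hat w+tx\in W$ with $r^*(w(t))\to r^*(\hat w)$, concluding via a closure argument (this requires some bookkeeping with the index set of zero rows of $F$, which is where the hypothesis $F\neq 0$ enters). You instead pass through the spectral radius: from $(F+rV)w\le\vect 0_n$ with $w>\vect 0_n$ you multiply by $-V^{-1}\ge 0$ to get $-V^{-1}Fw\le rw$, extract $\rho(-FV^{-1})\le r$ by pairing against a non-negative left Perron eigenvector (correctly sidestepping the reducibility issue, since $w>\vect 0_n$ makes the pairing positive), and then for every $r'>r$ use Lemma~\ref{lem:stability} to make $F+r'V$ Hurwitz and Lemma~\ref{thm:metzler-hurwitz:2} to manufacture a strict witness in $W$; the chain of lemmas is non-circular since both are proved before Theorem~\ref{thm:char}. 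Your argument is shorter, avoids the $\epsilon$--$t$ estimates with the constants $\kappa_i$, does not need the hypothesis $F\neq 0$ or the index-set bookkeeping, and in effect identifies both infima with $\rho(FV^{-1})$, which anticipates the conclusion of Theorem~\ref{thm:char}; the paper's perturbation proof, by contrast, is a purely order-theoretic/continuity argument that never invokes Perron--Frobenius beyond what the quoted lemmas contain, and keeps the relaxing lemma independent of the spectral characterization it is later used to prove.
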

	
	\begin{proof}
		It is obvious that $\hat R_0 \le R_0$, so we need only show that $\hat R_0 \ge 
		R_0$. Before we embark on this task, we will construct useful expressions for 
		$\hat R_0$ and $R_0$. Let $I$ be the (possibly empty) set of 
		indices for which the $i$th row of $F$ is zero: $F^{(i)} = \vect 0_n$. 
		For any $w \in \hat W$, observe that $\{r > 0 : (F + r V)w \le \vect 0_n\}$ is 
		non-empty if and only if $(V w)_i = 0$ implies that $i \in I$. Thus, we define 
		\[
			\bar W = \{w > \vect 0_n : V w \le \vect 0_n ~\text{and}~(V w)_i < 0 
			~\text{for all}~ i \in I^c\}
		\]
		where $W \subset \bar W \subset \hat W$. Then we can write
		\begin{align*}
			\hat R_0 &= \inf\left(
			\bigcup_{w \in \hat W} \{r > 0 : (F + r V) w \le \vect 0_n \}
			\right) \\
			&= \inf\left( \bigcup_{w \in \bar W} \{r > 0 : (F + r V) w \le \vect 
			0_n\} \right) \\
			&= \inf_{w \in \bar W} \left( \inf\{r > 0 : (F + r V) w \le \vect 
			0_n\}\right) = \inf \hat{\mathcal R}
		\end{align*}
		where $\hat{\mathcal R} = \{r^*(w) : w \in \bar W\}$, and $r^*: \bar W \to 
		\real_{\ge 0}$ is the map defined by
		\[
			r^*(w) = \inf\{r > 0 : (F + r V) w \le \vect 0_n\}, \;\; \forall w \in \bar 
			W
		\]		
		It is straightforward to solve for $r^*(w)$:
		\[
			r^*(w) = \max_{i \in I^c} \left\{
			\frac{(F w)_i}{|V w|_i}
			\right\}, \;\; \forall w \in \bar W
		\]
		Similar to $\hat R_0$, we have the following expression for $R_0$: 
		\begin{align*}
			R_0 &= \inf\left(\bigcup_{w \in W} \{r > 0 : (F + r V)w < \vect 0_n\}\right) 
			\\
			&= \inf_{w \in W} \left( \inf\{r > 0 : (F + r V) w < \vect 0_n\} \right) 
			\\ 
			&= \inf_{w \in W} \left( \min\{r > 0 : (F + r V) w \le \vect 0_n \} \right)
			= \inf \mathcal R
		\end{align*}
		where $\mathcal R = \{r^*(w) : w \in W\}$. 
		
		The remainder of the proof is to show 
		that $R_0$ is a lower bound on $\hat{\mathcal R}$. Let $\hat r \in \hat{\mathcal 
		R}$, so that $\hat r > 0$ and $(F + \hat r V) \hat 
		w \le \vect 0_n$ for some $\hat w \in \bar W$. Let $x > \vect 0_n$ such that $V 
		x < \vect 0_n$ (which must exist because $V$ is Hurwitz), and for all $t \ge 0$, 
		let $w(t) = \hat w + t x$. We can also show that
		\[
			\left|\frac{(F w(t))_i}{|V w(t)|_i} - \frac{(F \hat w)_i}{|V \hat w|_i} 
			\right| \le \kappa_i t, \;\; \forall t \ge 0 ~\text{and}~ \forall i \in I^c
		\]
		where
		\[
		\kappa_i = \frac{1}{|V \hat w|_i} \left( (F x)_i + \frac{(F \hat w)_i |V 
			x|_i}{|V \hat w|_i }\right) > 0, \;\; \forall i \in I^c
		\]
		Then for all $t > 0$,
		\begin{align*}
			\left|r^*(w(t)) - r^*(\hat w)\right|
			&= \left| \max_{i \in I^c} \left\{
			\frac{(F w(t))_i}{|V w(t)|_i}
			\right\} - \max_{j \in I^c} \left\{
			\frac{(F \hat w)_j}{|V \hat w|_j}
			\right\} \right| \\
			&\le \left( \max_{i \in I^c} \kappa_i \right) t 
		\end{align*} 
		Thus, given any $\epsilon > 0$, we can choose $t < \epsilon \left( \max_{i 
			\in I^c} \kappa_i \right)^{-1}$ to ensure that $|r^*(w(t)) - r^*(\hat w)| < 
		\epsilon$. Because $w(t) \in W$ for all $t > 0$, it is the case that $r^*(w(t)) 
		\in \mathcal R$ for all $t > 0$, so that every open ball around $r^*(\hat w)$ 
		contains a point in $\mathcal R$. Then $r^*(\hat w) \in \cl(\mathcal R)$, which 
		implies that $r^*(\hat w) \ge R_0$. But $r^*(\hat w) \le \hat r$, and $\hat r$ 
		was chosen arbitrarily from $\hat{\mathcal R}$, so $R_0$ is a lower bound on 
		$\hat{\mathcal R}$. But $\hat R_0$ is the greatest such lower bound, so we 
		conclude that $\hat R_0 \ge R_0$.  
	\end{proof}
	
	\section{Proof of Theorem \ref{thm:transcription}}
	\label{app:transcription}
	
	Let $\mathcal G_1, \mathcal G_2$ be the sets of feasible points $\theta$ for 
	\eqref{prob:r0-constrained-explicit} and \eqref{prob:budget-constrained-explicit}, 
	respectively. We define a Metzler matrix
	\begin{equation} \label{eq:M}
		M(r, \theta) = F(\theta) + r V_{od}(\theta) - r V_d(\theta) 
	\end{equation}
	Since the determinant of $M(r, \theta)$ is a polynomial in $r$ of degree $n$, for 
	some scalars $a_1, a_2, \dots, a_n \in \mathbb C$, we can write $|M(r, \theta)| = (r 
	- a_1)(r - a_2) \cdots (r - a_n) $. Due to \eqref{eq:char-stab} in Theorem 
	\ref{thm:char}, $M(R_0(\theta), \theta)$ must 
	be singular, so $R_0(\theta)$ is a root; then we can
	assign $a_1, a_2, \dots, a_\ell = R_0(\theta)$ up to some multiplicity $\ell$. Define 
	a ``pseudo-determinant'' $\mu(r, \theta) = (r - a_{\ell + 1}) \cdots (r - a_n)$ as 
	the product of the remaining factors, which is real and nonzero for all $r \ge 
	R_0(\theta)$. Then 
	\[
		M^{-1}(r, \theta) = \frac{\adj(M(r, \theta))}{(r - R_0(\theta))^\ell \mu(r, 
		\theta)}, \;\; \forall r > R_0(\theta)
	\]
	Now, pick $z > \vect 0_n$ arbitrarily, and define
	\begin{align}
		w(r, \theta) &= -(r - R_0(\theta))^\ell M^{-1}(r, \theta) z, \; \; \forall r > 
		R_0(\theta) \\
		w^*(\theta) &= \!\!\!\!\! \lim_{r \to R_0(\theta^*)^+} \!\!\!\! w(r, \theta) = 
		-\left(\frac{\adj(M(R_0(\theta), \theta))}{\mu(R_0(\theta), \theta)} \right) z 
		\label{eq:w}
	\end{align} 
	For any $r > R_0(\theta)$, \eqref{eq:char-stab} in Theorem \ref{thm:char} implies 
	that $M(r, \theta)$ is Hurwitz, so $-M^{-1}(r, \theta) \ge 0$, and thus $w(r, \theta) 
	> \vect 0_n$. Furthermore, $M(r, \theta) w(r, \theta) < \vect 0_n$, so expanding 
	$M(r, \theta)$ with \eqref{eq:M} and re-arranging, we obtain $p(r, w(r, \theta), 
	\theta) < \vect 1_n$. We now use $w^*(\theta)$ to formally establish relationships 
	between the feasible sets of both pairs of optimization problems:

	\begin{lemma}[Relating the Feasible Sets] \label{lem:feasible} 
		For each $\tau > 0$, let $\Theta_1(\tau) \subset \real^k$ be the set 
		of $\theta$ such that $(r, w, \theta) \in \cl(\mathcal F_1(\tau))$ for some 
		$r, w$. Similarly, let $\Theta_2 \subset \real^k$ be the set of $\theta$ such 
		that $(r, w, \theta) \in \cl(\mathcal F_2)$ for some $r, w$. The following are 
		true:
		\begin{enumerate} 
			\item \label{prop:f3} $\theta \in \mathcal G_1 \!\!\implies\!\! (R_0(\theta), 
			w^*(\theta), \theta) \in \cl(\mathcal F_1(\tau))$ for 
			all $\tau > 0$, 
			\item \label{prop:f4} $\theta \in \mathcal G_2 \implies (R_0(\theta), 
			w^*(\theta), \theta) \in \cl(\mathcal F_2)$,
			\item \label{prop:f1} $\mathcal G_1 = \bigcap_{\tau > 0}
			\Theta_1(\delta)$, and
			\item \label{prop:f2} $\mathcal G_2 = \Theta_2$.
		\end{enumerate}  
	\end{lemma}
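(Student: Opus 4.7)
The plan is to prove the two pointwise implications (the first and second claims) by producing explicit approximating sequences based on the parametric family $(r, w(r, \theta), \theta)$ for $r$ just above $R_0(\theta)$, and then to derive the two set identities (the third and fourth claims) by combining these forward inclusions with converse inclusions obtained from a continuity argument on limit points of $\cl(\mathcal{F}_i)$.

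\textbf{Forward implications.} Fix $\theta \in \mathcal{G}_i$. The discussion preceding the lemma already shows that for any $r > R_0(\theta)$, the triple $(r, w(r, \theta), \theta)$ satisfies $w(r, \theta) > \vect 0_n$ and the strict inequality $p(r, w(r, \theta), \theta) < \vect 1_n$; together with $h(\theta) \le \vect 1_q$, this accounts for every constraint of $\mathcal{F}_i$ except the problem-specific one. For $\mathcal{F}_1(\tau)$, any $r$ in the nonempty interval $(R_0(\theta), r_{\rm max} + \tau)$ also satisfies $r \le r_{\rm max} + \tau$; for $\mathcal{F}_2$, the budget bound $c(\theta) \le c_{\rm max}$ is inherited from $\theta \in \mathcal{G}_2$. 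When $\theta > \vect 0_k$ the triple lies directly in $\mathcal{F}_i$; on the boundary of the positive orthant I would perturb to a strictly positive $\theta'$ close to $\theta$ that still satisfies the constraints and send $\theta' \to \theta$. Finally, letting $r \to R_0(\theta)^+$ identifies the limit as $(R_0(\theta), w^*(\theta), \theta)$, which therefore lies in $\cl(\mathcal{F}_i)$ by the very definition of $w^*$ in \eqref{eq:w}.

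\textbf{Set identities and main obstacle.} The forward inclusions in the third and fourth claims follow at once from the first two. For the converse in the third claim, fix $\theta \in \bigcap_{\tau > 0} \Theta_1(\tau)$; for each $\tau > 0$ there is a sequence $(r_n, w_n, \theta_n) \in \mathcal{F}_1(\tau)$ with $(r_n, w_n, \theta_n) \to (r_\tau, w_\tau, \theta)$. The feasibility $p(r_n, w_n, \theta_n) \le \vect 1_n$ combined with \eqref{eq:char-geo} gives $R_0(\theta_n) \le r_n \le r_{\rm max} + \tau$, and continuity of both $h$ (posynomial) and $R_0(\theta) = \rho(-F(\theta) V(\theta)^{-1})$ (which uses that $V(\theta)$ remains Hurwitz on the closed feasible set by Assumption \ref{ass:geometric}) lets me pass to the limit to obtain $h(\theta) \le \vect 1_q$ and $R_0(\theta) \le r_{\rm max} + \tau$; sending $\tau \to 0^+$ concludes $\theta \in \mathcal{G}_1$. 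The fourth claim is strictly simpler, replacing the $R_0$ step with continuity of $c$ applied to $c(\theta_n) \le c_{\rm max}$. The main obstacle will be reconciling the strict positivity $\theta > \vect 0_k$ inside the geometric programs with the closed condition $\theta \ge \vect 0_k$ in the explicit allocation problems: producing the approximating strictly positive $\theta'$ with $h(\theta') \le \vect 1_q$ is delicate precisely when $\theta$ lies on a corner of the feasible region. A secondary technicality is justifying that $w^*(\theta)$ in \eqref{eq:w} is well-defined even when the denominator $\mu(R_0(\theta), \theta)$ vanishes, by falling back on the limit $\lim_{r \to R_0(\theta)^+} w(r, \theta)$, which exists since $M(r, \theta)$ is Hurwitz for $r > R_0(\theta)$ by \eqref{eq:char-stab}.
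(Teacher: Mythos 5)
Your proposal follows essentially the same route as the paper: the forward implications use the curve $(r, w(r,\theta), \theta)$ with $r \downarrow R_0(\theta)$ converging to $(R_0(\theta), w^*(\theta), \theta)$ as in \eqref{eq:w}, and the converse inclusions use exactly the paper's limit argument, passing $h(\theta_n) \le \vect 1_q$, $c(\theta_n) \le c_{\rm max}$, and $R_0(\theta_n) \le r_n$ (the last via \eqref{eq:char-geo}) to the limit and then sending $\tau \to 0^+$. The strict-positivity mismatch ($\theta > \vect 0_k$ in Problems \ref{prob:r0-constrained}--\ref{prob:budget-constrained} versus $\theta \ge \vect 0_k$ in $\mathcal G_1, \mathcal G_2$) that you flag as the main obstacle is simply not addressed in the paper's proof, which places $(r, w(r,\theta), \theta)$ in $\mathcal F_1(\tau)$ without comment, so your proposed boundary perturbation is a refinement of, not a departure from, the published argument.
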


	\begin{proof}
		To prove \ref{prop:f3}, let $\theta \in \mathcal G_1$, so $h(\theta) \le 
		\vect 1_q$ and $R_0(\theta) \le r_{\rm max}$. Fix any $\tau > 0$, and let 
		$\epsilon > 0$. By \eqref{eq:w}, we can choose $r > R_0(\theta)$ such that 
		$||w(r, \theta) - w^*(\theta)|| < \epsilon$ and $|r - R_0(\theta)| < 
		\min\{\tau, \epsilon\}$. Since $p(r, w(r,\theta), \theta) \le \vect 1_q$ 
		and $r < R_0(\theta) + \tau \le r_{\rm max} + \tau$, we have $(r, 
		w(r, \theta), \theta) \in \mathcal F_1(\tau)$, so every 
		neighborhood of $(R_0(\theta), w^*(\theta), \theta)$ (by choice of $\epsilon$) 
		contains a point in $\mathcal F_1(\tau)$. We prove \ref{prop:f4} by a 
		similar argument (without $\tau$), noting that $\theta \in \mathcal G_2$ 
		implies $c(\theta) \le c_{\rm max}$.  
		
		To prove \ref{prop:f1}, we note that \ref{prop:f3} implies that $\mathcal G_1 
		\subseteq \bigcap_{\tau > 0} \Theta_1(\tau)$. If $\theta \in 
		\Theta_1(\tau)$ for all $\tau > 0$, then $h(\theta) \le \vect 1_q$ 
		and $R_0(\theta) \le r_{\rm max} + \tau$ for all $\tau > 0$, which 
		implies $R_0(\theta) \le r_{\rm max}$, and thus $\theta \in \mathcal G_1$. Hence 
		$\mathcal G_1 \supseteq \bigcap_{\tau > 0} \Theta_1(\tau)$ as well. 
		Statement \ref{prop:f2} follows from a similar argument.  
	\end{proof}  
	
	\begin{proof}[Proof of Theorem \ref{thm:transcription}]
		In order to prove \ref{prop:r0-constrained}, we first define $c^*(\delta)$ as the 
		infimum of Problem \ref{prob:r0-constrained} for all $\tau > 0$, and we 
		define $c^*$ as the minimum cost of the $R_0$-constrained allocation problem. 
		Noting that $\Theta_1(\tau)$ are nested downward as $\tau \to 
		0$: 
		\[
			c^* \!=\! \min \mathcal G_1
			\!=\! \min \bigcap_{\tau > 0} \Theta_1(\tau)
			\!=\! \lim_{\tau \to 0^+} \min \Theta_1(\tau)
			\!=\! \lim_{\tau \to 0^+} c^*(\tau)
		\]
		The second step is due to Lemma \ref{lem:feasible}, and the third step is a 
		general property of intersections of nested sets. Let $\theta^*$ be an optimal 
		$R_0$-constrained allocation. Then $\theta^* \in 
		\mathcal G_1$, so by Lemma \ref{lem:feasible},
		$(R_0(\theta^*), w^*(\theta^*), \theta^*) \in \cl(\mathcal F_1(\tau))$ for 
		all $\tau > 0$, and we have shown that $c^*(\tau) \to c^* = 
		c(\theta^*)$ as $\tau \to 0_+$. On the 
		other hand, if there exist $r^*, w^*$ such that $(r^*, w^*, \theta^*) \in 
		\cl(\mathcal F_1(\tau))$ for all $\tau > 0$, then Lemma 
		\ref{lem:feasible} 
		guarantees $\theta^* \in \mathcal G$, and $c^*(\tau) \to c(\theta^*)$ 
		implies that $c(\theta^*) = c^*$. 
		
		We now prove \ref{prop:budget-constrained}. Let $\theta^*$ be an optimal 
		budget-constrained allocation. Then $\theta^* \in 
		\mathcal G_2$, so Lemma \ref{lem:feasible} implies that $(R_0(\theta^*), 
		w^*(\theta^*), \theta^*) \in \cl(\mathcal F_2)$. Consider any other point 
		$(r, w, \theta) \in \cl(\mathcal F_2)$, and note that Lemma \ref{lem:feasible} 
		also implies $\theta \in \mathcal G_2$, so that $R_0(\theta^*) \le R_0(\theta)$. 
		But $R_0(\theta) \le r$ by \eqref{eq:char-geo}, so $R_0(\theta^*) \le r$. Thus 
		$R_0(\theta^*)$ is the min value of $r$ over $\cl(\mathcal F_2)$.
		
		Finally, suppose that $(R_0(\theta^*), w^*(\theta^*), \theta^*) \in \cl(\mathcal 
		F_2)$ and that $R_0(\theta^*)$ is the infimum of Problem 
		\ref{prob:budget-constrained}. 
		Lemma \ref{lem:feasible} guarantees that $\theta^* \in \mathcal G_2$. Consider 
		any other point $\theta \in \mathcal G_2$, and note that $(R_0(\theta), 
		w^*(\theta), \theta) \in \cl(\mathcal F_2)$ as well, so that $R_0(\theta^*) \le 
		R_0(\theta)$. Therefore $\theta^*$ is a minimizer for 
		\eqref{prob:budget-constrained-explicit}, so it is an optimal budget-constrained 
		allocation. 
	\end{proof} 
	
	\begin{IEEEbiography}[{\includegraphics[width=1in,height=1.25in,clip,keepaspectratio]
		{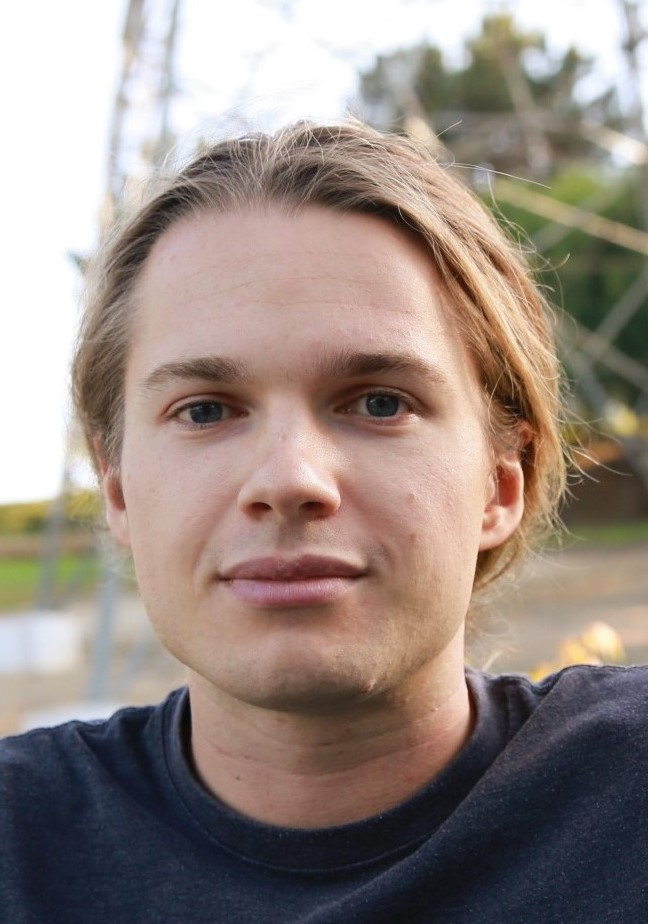}}]{Kevin D. Smith} is a Ph.D. candidate with the Center 
		for Control, Dynamical Systems and Computation at the University of California, 
		Santa Barbara (UCSB). He received his B.S. in physics from Harvey Mudd College in 
		2017 and his M.S. in electrical and computer engineering from UCSB in 2019. He is 
		interested in dynamics, control, and identification of network systems, including 
		power grids and other infrastructure systems.
	\end{IEEEbiography} 

	\begin{IEEEbiography}[{\includegraphics[width=1in,height=1.25in,clip,keepaspectratio]
		{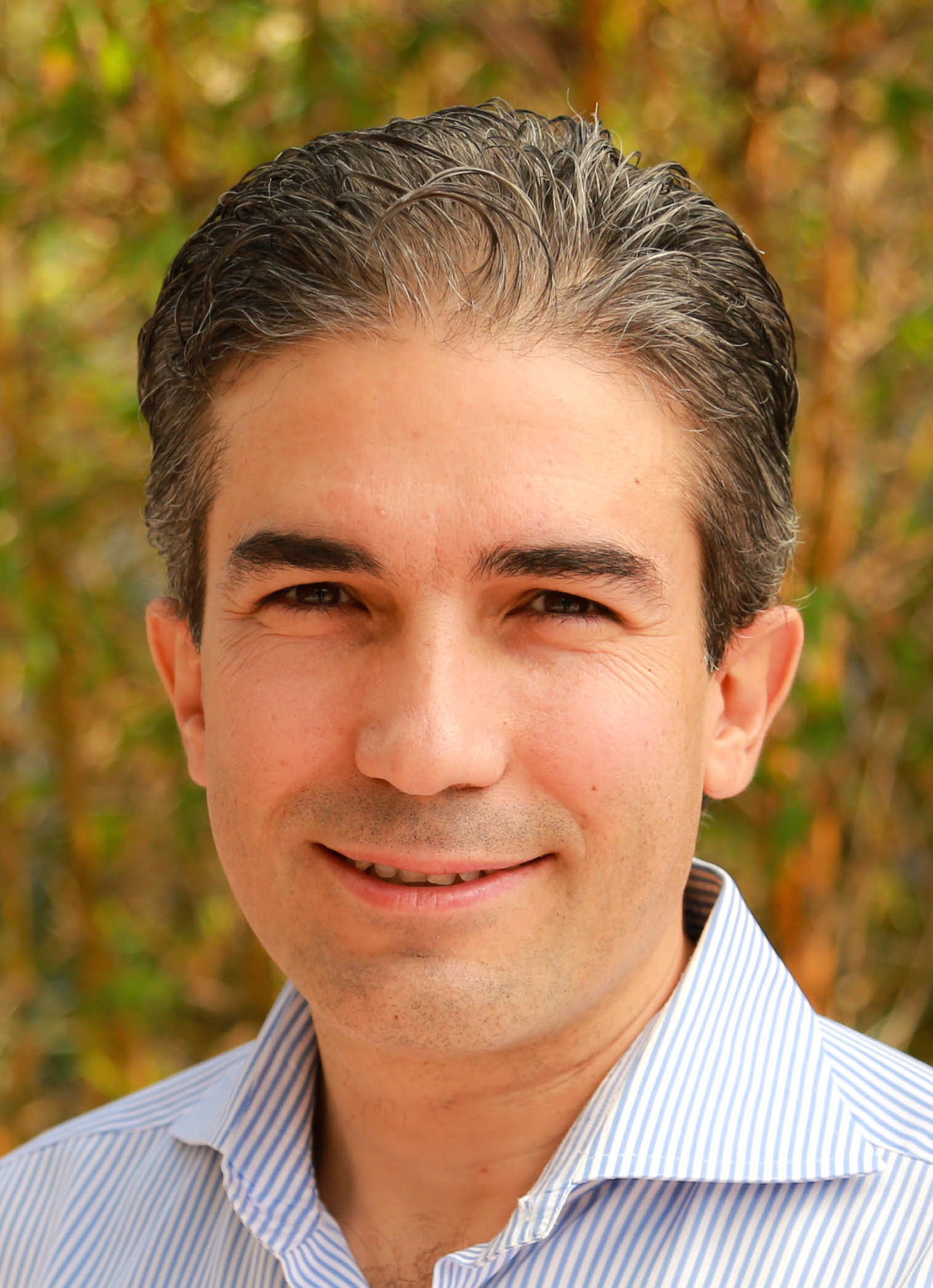}}]{Francesco Bullo} (Fellow, IEEE) is a
	      Distinguished Professor of Mechanical Engineering at the University
	      of California, Santa Barbara. He served as IEEE CSS President and as
	      SIAG CST Chair. His research focuses on contraction theory, network
	      systems and distributed control with application to machine
	      learning, power grids, social networks, and robotics. His latest book is 
	      "Contraction Theory for Dynamical Systems" (KDP, 2022, v1.0).  He is a 
	      Fellow of ASME, IFAC, and SIAM.
	\end{IEEEbiography}
	
	\bibliographystyle{ieeetr}
	\bibliography{alias,Main,FB}
		
\end{document}